\documentclass[11pt]{amsart}
\usepackage{amsthm}
\usepackage{amsmath}
\usepackage{amsfonts}
\usepackage{amssymb}
\usepackage{url}
\usepackage{epsfig}

\textheight 8.0in
\textwidth 6.00in
\topmargin -0.25in
\oddsidemargin 0.25in
\evensidemargin 0.25in
\parskip 1.0ex

\usepackage{graphicx,amssymb,amsmath,amsthm}
\newtheorem{theorem}{Theorem}[section]
\newtheorem{lemma}[theorem]{Lemma}

\theoremstyle{definition}

\newtheorem{example}[theorem]{Example}

\newcommand{\lra}{\longrightarrow}
\theoremstyle{remark}

\newcommand{\A}{\mathcal{A}}
\newcommand{\x}{\tilde{x}}
\newcommand{\R}{\mathbb{R}}
\newcommand{\E}{\mathcal{E}}

\newcommand{\T}{\mathbb{T}}
\newcommand{\Z}{\mathbb{Z}}
\newcommand{\N}{\mathbb{N}}

\newcommand{\MS}{\mathbb{S}}
\newcommand{\F}{\mathcal F}
\renewcommand{\H}{\mathcal H}

\newcommand{\EE}{\mathbb E}


\begin{document}

\title{The Performance of PCM Quantization Under Tight Frame Representations}

\author{Yang Wang}
\thanks{Yang Wang was supported in part by the
       National Science Foundation grant DMS-0813750, DMS-08135022 and DMS-1043034. Zhiqiang Xu was supported  by NSFC grant 10871196 and by the Funds for Creative Research Groups of China (Grant No. 11021101).}
\address{Department of Mathematics  \\ Michigan State University\\
East Lansing, MI 48824, USA}
\email{ywang@math.msu.edu}

\author{Zhiqiang Xu}
\address{LSEC, Inst.~Comp.~Math., Academy of
Mathematics and System Sciences,  Chinese Academy of Sciences, Beijing, 100091, China}
\email{xuzq@lsec.cc.ac.cn}

\maketitle
\begin{abstract}
In this paper, we study the performance of the PCM scheme with
linear quantization rule for quantizing finite unit-norm tight
frame expansions for $\R^d$ and derive the PCM quantization error
{\em without} the White Noise Hypothesis. We prove that for the class of unit norm tight frames derived from uniform frame paths the quantization error has
an upper bound of $O(\delta^{3/2})$ regardless of the frame redundancy. This is achieved
using some of the techniques developed by G\"{u}nt\"{u}rk in his study of
Sigma-Delta quantization. Using tools of harmonic analysis we show that
this upper bound is sharp for $d=2$. A consequence of this result is that,
unlike with Sigma-Delta quantization, the error for PCM quantization in general
does not diminish to zero as one increases the frame redundancy.
We extend the result to high dimension and show that the
PCM quantization error has an upper bound  $O(\delta^{(d+1)/2})$ for asymptopitcally
equidistributed unit-norm tight frame of $\R^{d}$.
\end{abstract}

\section{Introduction}

In signal processing, coding and many other practical applications it is important to find a
suitable representation for a given signal. In general, the first step towards this objective is
finding an atomic decomposition of the signal using a given set of {\em atoms}, or {\em
dictionary}. In this approach, we assume that the signal $x$ is an element of a finite-dimensional
Hilbert space $H=\R^d$ and $x$ is represented as a linear combination of $\{e_j\}_{j=1}^N$, i.e.,
\begin{equation}\label{eq:expan}
x\,\,=\,\, \sum_{j=1}^N c_j e_j,
\end{equation}
where $c_j$ are real numbers. In practical application, instead of a true basis, $\{e_j\}_{j=1}^N$
is chosen to be a {\em frame}. Given ${\mathcal F}=\{e_j\}_{j=1}^N$, we let $F=[e_1,\ldots,e_N]$ be
the corresponding matrix whose columns are $\{e_j\}_{j=1}^N$. We say ${\mathcal F}$ is a {\em
frame} of $\R^d$ if the matrix $F$ has rank $d$. The frame $\F$ is {\em tight} with frame constant
$\lambda$ if $FF^*=\lambda I_d$.  The matrix $F^T$ as an operator $F^*:\R^d\rightarrow \R^N$ is
often known as the {\em analysis operator} with respect to the frame $\F$, where
$(F^*x)_j=\left<x,e_j\right>$. The adjoint operator given by $F: \R^N\rightarrow \R^d$,
$Fy=\sum_{j=1}^Ny_je_j$ is known as the {\em synthesis operator} with respect to $\F$. We call the
operator $S:=FF^*$ as the {\em frame operator}. Then $\{S^{-1}e_j\}_{j=1}^N$ is called the {\em
canonical dual frame} of the frame $\F$. It is easy to see that for any $x\in\R^d$ we have the
reconstruction formula
\begin{equation}\label{eq:frame}
     x=\sum_{j=1}^N\left<x,e_j\right>(S^{-1}e_j)=\sum_{j=1}^N\left<x,S^{-1}e_j\right>e_j.
\end{equation}
If $\F$ is a tight frame with frame bound $\lambda$ then clearly $S^{-1}e_j=\lambda^{-1}e_j$. In
particular, for the important case of {\em finite unit-norm tight frames} in which $\|e_j\|=1$ for
all $j$ we have $\lambda=N/d$ and (\ref{eq:frame}) is reduced to
$$
x=\frac{d}{N}\sum_{j=1}^N\left<x,e_j\right>e_j \quad\quad\mbox{for all } x\in \R^d.
$$

In the digital domain the representation must be quantized. In other words, the coefficients $\left<x,e_j\right>$ from the analysis operator  must be mapped to a discrete set of values $\A$ called the {\em
quantization alphabet}. The simplest way  for such a mapping is the {\em Pulse Code Modulation
(PCM)} quantization scheme, which has $\A=\delta\Z$ with $\delta>0$ and maps a value $t$ the value
in $\A$ that is the closest to $t$. More precisely, the mapping is done by the function
$$
Q_\delta(t)\,\,:=\,\, {\rm arg min}_{r\in \A} |t-r|\,\,=\,\, \delta \left\lfloor
\frac{t}{\delta}+\frac{1}{2}\right\rfloor
$$
and the quantization function $Q_\delta$ is called the {\em quantizer}. Thus in practical applications
we in fact have only a quantized representation through the quantized analysis operator
$\tilde y_j:=Q_\delta((F^*x)_j)=Q_\delta(\left<x,e_j\right>),j=1,\ldots,N$ for each $x\in\R^d$. The reconstruction
through the frame operator yields
$$
   \tilde x\,\,=\,\,\sum_{j=1}^NQ_\delta(\left<x,e_j\right>)(S^{-1}e_j) \quad\quad\mbox{for all } x\in \R^d.
$$
Naturally we may want to ask about the error for this reconstruction.

An important class of frames is the unit-norm tight frames. This paper shall focus on this class of
frames, although the questions we raise and the techniques we use can be applied to other frames.
Let $\F=\{e_j\}_{j=1}^N$ be a unit-norm tight frame in $\R^d$. For each $x\in\R^d$  we have
\begin{equation}\label{eq:expansion}
    x=\frac{d}{N}\sum_{j=1}^Nc_je_j, \quad\quad \mbox{where}\quad \quad  c_j=\left<x,e_j\right>.
\end{equation}
With PCM quantization and quantization alphabet $\A=\delta\Z$ the reconstruction becomes
\begin{equation}\label{eq:quanrecon}
   \x_\F\,\,=\,\,\frac{d}{N}\sum_{j=1}^Nq_j e_j, \quad\quad {\rm where }\quad \quad
q_j=Q_\delta(c_j)\in \A.
\end{equation}
Under this quantization we denote the reconstruction error by
$$
E_\delta(x,\F)\,\,:=\,\, \|x-\x_\F\|
$$
where $\|\cdot \|$ is $\ell_2$ norm. An important question is how $E_\delta(x,\F)$ behaves for a
given frame $\F$ and either for a given $x$ or for a given distribution of $x$. To simplify the
problem, the so-called {\em White Noise Hypothesis } (WNH) is employed by engineers and
mathematicians in this area (see \cite{GoVeTh98,Benn48,BePoYi06,BePoYi06II,BorWan09, JiWaWa07}).
 The WNH asserts that the quantization error sequence $\{x_j-q_j\}_{j=1}^N$ can be modeled as
 an independent sequence of i.i.d. random variables that are uniformly distributed on the
  interval $(-\delta/2, \delta/2)$. With the WNH, one can obtain the mean square error
$$
     MSE\,\,=\,\,{\mathcal E}(\|x-\x_\F\|^2)\,\,=\,\,\frac{d^2\delta^2}{12 N}.
$$
It has been shown that the WNH is asymptotically correct for fine quantizations (i.e. as $\delta$
tends to 0) under rather general conditions, see \cite{BorWan09, JiWaWa07}. Although the result implies
that the MSE decreases on the order of $1/N$, this is in fact quite misleading because the WNH
holds only asymptotically when the frame $\F$ (and hence $N$) is fixed while $\delta$ decreases to
0, and with a fixed $\delta$ WNH cannot hold whenever $N>d$ \cite{BorWan09}. Furthermore, the MSE only gives
 information about the average behavior of
quantization errors. There has not been an in-depth study on the behavior of the error
$E_\delta(x,\F)$ for a given $x$ and as one fixes $\delta$. This contrast sharply with the study
on the quantization error from the
Sigma-Delta quantization schemes, where the quantization step $\delta$ is typically assumed to be
fixed and rather coarse, see e.g. \cite{BePoYi06,BePoYi06II}. One of the objectives of this paper is to study
the behavior of $E_\delta(x,\F)$ as we choose different unit-norm tight frames $\F$.

It is well known that with the Sigma-Delta quantization schemes the reconstruction error will
diminish to 0 as we increase the redundancy of the frame $\F$. For unit-norm tight frames it means
that for fixed $\delta$ by letting $N \rightarrow \infty$ the reconstruction error tends to 0, even
when the quantization is coarse in the sense that $\delta>> 0$. One naturally asks whether similar
phenomenon also occurs with PCM quantizations, i.e. how much can we mitigate the reconstruction
error $E_\delta(x,\F)$ if we increase the redundancy of the frame $\F$, and is it possible that by
increasing redundancy in a suitable way the reconstruction error
$E_\delta(x,\F)$ be made arbitrarily small for all $x$? Clearly if $\|x\|< \delta/2$
then all coefficients are quantized to 0 and hence the quantization error is always $x$. Hence
in this case the error does not diminish. However, when $\|x\|>> \delta$ we may expect that
the increase redundancy will help mitigating the error.

In this paper we attempt a more in-depth study of the PCM quantization error $E_\delta(x,\F)$
with respect to unit-norm tight frames $\F$. In particular we study the asymptotic behavior
of $E_\delta(x,\F)$ as we increase the redundancy of the unit-norm tight frame. A surprising
result (at least to us)
is that {\em in general} the quantization error $E_\delta(x,\F)$ does not diminish to 0
no matter how much one increases the redundancy of the frame $\F$. The following example is
a good illustration.

\begin{example}\label{ex:num}
We choose $x_0=[\pi, e]^T$, $\delta=1/16$. Let $\tilde{\F}=\{e_j\}_{j=1}^N$ be the harmonic frame
in $\R^2$ given by $e_j=[\cos(\frac{2j\pi}{N}),\sin(\frac{2j\pi}{N})]^T$,
$j=0,\ldots,N-1$, which is a 2-dimensional unit-norm tight frame. Then we
compute $E_\delta(x_0,\tilde{\F})$ for $N=10,\ldots, 2000$ and show the
result in Figure 1. From the figure one can see that although as we increase $N$
the quantization error $E_\delta(x_0,\tilde{\F})$ decreases initially, it settles down to around positive value no matter how much redundancy is increased. Thus
PCM quantization fails to take advantage of redundance.
\begin{figure}[!ht]
\begin{center}
\epsfxsize=12cm\epsfbox{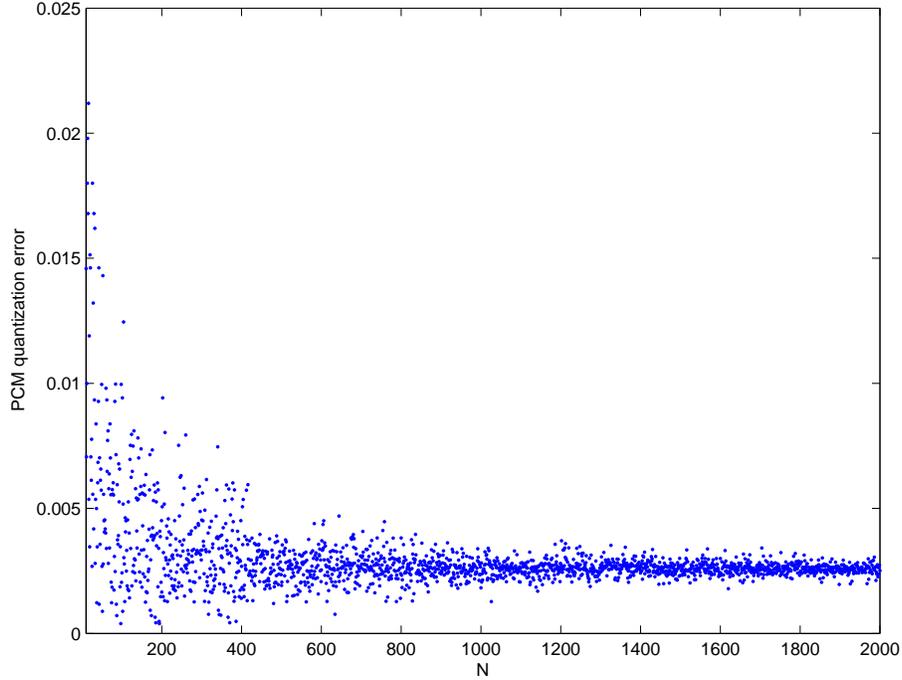}
\end{center}
\caption{ The frame expansion of $[\pi, e]^T\in \R^2$ with respect
to the frame $[\cos(2\pi j/N),\sin(2\pi j/N)]^T$ are quantized
using the PCM scheme with $\delta=1/16$. The figure shows the PCM
error  against the frame size $N$. }
\end{figure}
\end{example}

Of particular interest to this study is a very popular class of unit-norm tight frames
known as the {\em harmonic frames}. For any $N\geq d$ the harmonic frame
$\H_N^d=\{h_j^N\}_{j=0}^{N-1}$ is given by
$$
    h_j^N=\sqrt{\frac{2}{d}}\left[\cos\frac{2\pi j}{N},\sin \frac{2\pi
j}{N},\cos \frac{2\pi 2j}{N},\sin \frac{2\pi 2 j}{N},\ldots, \cos
\frac{2\pi \tilde d j  }{N},\sin \frac{2\pi \tilde d j }{N}\right]^T,
$$
if $d=2\tilde d$ is even or
$$
   h_j^N=\sqrt{\frac{2}{d}}\left[\frac{1}{\sqrt{2}},\cos\frac{2\pi
   j}{N},\sin \frac{2\pi j}{N},\ldots, \cos \frac{2\pi \tilde d j}{N},
    \sin \frac{2\pi \tilde d j }{N} \right]^T,
$$
if $d=2\tilde d+1$ is odd. Harmonic frames themselves are a special case of unit-norm tight frames
obtained from uniform frame paths introduced in \cite{BodPau07}. Let $f: [0,1] \lra \R^d$
be a continuous function with $\|f(t)\|=1$ for all $t$. It is called a
{\em uniform frame path} if for any $N \geq d$ the set of vectors
$\{f(\frac{j-1}{N})\}_{j=1}^{N}$ is a unit-norm tight frame in $\R^d$. So the
harmonic frame $\H_N^d$ is obtained simply by taking $N$ samples of the frame path
$$
    h(t) = \sqrt{\frac{2}{d}}\Bigl[\cos(t), \sin(t), \cos(2t), \sin(2t), \dots,
        \cos(\tilde dt), \sin(\tilde dt)\Bigr]^T
$$
if $d= 2\tilde d$ is even or
$$
    h(t) = \sqrt{\frac{2}{d}}\Bigl[\frac{1}{\sqrt 2}, \cos(t), \sin(t), \cos(2t), \sin(2t),
        \dots,\cos(\tilde dt), \sin(\tilde dt)\Bigr]^T
$$
if $d= 2\tilde d+1$ is odd. We examine the limitations of uniform frame paths in terms of
its ability to mitigate quantization errors with increasing redundancies.

Throughout the paper we shall use the notation $X\ll_{a,b,\ldots} Y$ to refer to the
inequality $X\leq C\cdot Y$, where the constant $C$ may depend on $a,b,\ldots,$ but
no other variable. We now state one of our main results in this paper.

\begin{theorem}\label{th:mupbound}
    Let $f: [0,1]\lra \R^d$ be a uniform frame path with bounded $f'$ and set
$\F:= \{f(\frac{j-1}{N})\}_{j=1}^N$. Suppose that $x\in\R^d$ and $h(t):=\left<x,f(t)\right>$
is analytic such that all zeros of $h''(t)$ on $[0,1]$ are simple. Then
$$
     E_\delta(x,\F)\,\,\ll_x\,\, \sqrt{\frac{\delta}{N}}
$$
for $N\leq {1}/{\delta^2}$ and
$$
E_\delta(x,\F)\,\,\ll_x\,\, \delta^{3/2}
$$
for $N> {1}/{\delta^2}$.
\end{theorem}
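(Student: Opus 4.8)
The plan is to estimate the error vector $x - \x_\F = \frac{d}{N}\sum_{j=1}^N (q_j - c_j) f(\tfrac{j-1}{N})$ by exploiting structure in the quantization residuals $u_j := q_j - c_j = Q_\delta(c_j) - c_j$. Writing $c_j = h(\tfrac{j-1}{N})$ with $h(t) = \langle x, f(t)\rangle$ analytic, the key is the classical idea from Güntürk's work on Sigma-Delta quantization: the residuals $u_j \in [-\delta/2,\delta/2)$ are, up to a bounded error, given by a smooth "sawtooth" function applied to $h$, namely $u_j = -\psi(h(\tfrac{j-1}{N})/\delta)\cdot\delta$ where $\psi(s) = s - \lfloor s + \tfrac12\rfloor$ is the periodic sawtooth. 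Thus, up to the frame-path tightness averaging against $f$, the error is a Riemann-type sum $\frac{d}{N}\sum_j g(\tfrac{j-1}{N}) f(\tfrac{j-1}{N})$ where $g(t) := -\delta\,\psi(h(t)/\delta)$ is a highly oscillatory function — oscillating on the scale $\delta$ in amplitude of $h'$ — and we must bound this sum.

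The main steps, in order: (1) Reduce $E_\delta(x,\F)$ to controlling, for each fixed coordinate function $f_k$, the scalar sum $S_k := \frac{1}{N}\sum_{j=1}^N g(\tfrac{j-1}{N}) f_k(\tfrac{j-1}{N})$; since there are only $d$ coordinates and $d$ is fixed, it suffices to bound each $|S_k|$. (2) Split $[0,1]$ into arcs. On arcs where $h'$ is bounded away from $0$ — equivalently where $h'' $ does not vanish and $h$ is genuinely monotone at scale $\delta$ — the function $t \mapsto \psi(h(t)/\delta)$ is a smooth periodic function composed with a diffeomorphism, so $g(t) f_k(t)$ has a convergent Fourier-type expansion whose low modes are killed by cancellation against the oscillation (the "shift" in $h$ over one period $\sim \delta/h'$ produces the gain), yielding a contribution of size $O(\delta)$ per unit length with an extra factor of the sampling correction $O(1/N)$ — effectively $O(\delta/N + \delta^2)$ after summing the Euler–Maclaurin error against $f'$ bounded. (3) Near the (simple) zeros of $h''$, and near zeros of $h'$, the stationary-phase heuristic gives a worse but still controlled bound: around a nondegenerate critical point of $h$ the monotonicity degenerates over an interval of length $\sim \sqrt{\delta/|h''|}$, and on that interval one only gets the trivial bound $|g|\le \delta/2$, contributing $O(\sqrt{\delta}\cdot \delta) = O(\delta^{3/2})$; since $h''$ has finitely many simple zeros (this is where analyticity and the simple-zero hypothesis enter, bounding the number of such arcs by a constant $C_x$), the total bad contribution is $O_x(\delta^{3/2})$. (4) Finally, the Riemann-sum discretization error, i.e. replacing $\frac1N\sum_j$ by $\int_0^1$, costs $O(\frac{1}{N}\|g f_k\|_{BV})$; since $g$ has $O(1/\delta)$ jumps each of size $\le\delta$ over $[0,1]$, its total variation is $O(1)$ times $\|h'\|_\infty$, plus $\|f_k'\|_\infty$, so this discretization error is $O_x(1/N)$ — which for $N \le 1/\delta^2$ dominates by $\sqrt{\delta/N}$ only if we are more careful, so one instead groups samples into blocks of $\sim 1/(\delta N)$... the cleaner route is to bound the sum directly by comparing $\frac1N\sum_j g(\tfrac{j-1}{N})f_k(\tfrac{j-1}{N})$ against the integral of the smooth part and the oscillatory part separately, getting $O(\delta/N)$ from the oscillatory integral (one integration by parts) plus $O(1/N^2 \cdot \#\text{jumps})= O(1/(\delta N^2))$ from sampling across jumps; balancing $\delta^{3/2}$ against $\sqrt{\delta/N}$ gives exactly the threshold $N = 1/\delta^2$ claimed.

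I expect the main obstacle to be Step (2)/(4) done quantitatively: making rigorous the claim that on a "good" arc the sum $\frac1N\sum_j \delta\,\psi(h(\tfrac{j-1}{N})/\delta) f_k(\tfrac{j-1}{N})$ is $O_x(\delta/N + \delta^2)$ rather than merely $O(\delta)$. This requires a careful change of variables $s = h(t)/\delta$ turning the sum into a sum over a near-arithmetic progression in $s$, then summing the Fourier series of $\psi$ and applying a discrete stationary-phase / exponential-sum estimate (à la van der Corput) uniformly in the Fourier mode, with the decay $|\hat\psi(n)| \sim 1/n$ just barely summable — so one needs to track the dependence on the number of modes and truncate at $\sim 1/\delta$. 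The secondary obstacle is bookkeeping the transition regions near critical points so that the "good" estimate and the "bad" $O(\delta^{3/2})$ estimate glue without double-counting, and verifying that the constant depends only on $x$ (through $\|h'\|_\infty$, $\|h''\|_\infty$, the minimum spacing of zeros of $h''$, and $\|f'\|_\infty$, all finite by the analyticity and boundedness hypotheses). Once the per-arc bounds are in hand, summing over the $O_x(1)$ arcs and the $d$ coordinates is routine, and the two regimes $N \lessgtr 1/\delta^2$ fall out of balancing the two error sources.
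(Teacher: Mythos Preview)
Your proposal takes a genuinely different route from the paper. The paper does \emph{not} work coordinate-wise with the sawtooth; instead it first performs an Abel summation on the error vector: with $u_j=\sum_{k\le j}(c_k-q_k)$ one has
\[
x-\x_\F=\frac{d}{N}\Bigl(\sum_{j=1}^{N-1}u_j(e_j-e_{j+1})+u_Ne_N\Bigr),
\]
so that $E_\delta(x,\F)\ll \frac{d}{N}\max_j|u_j|$ using $\|e_j-e_{j+1}\|\ll 1/N$. The whole problem is then to bound the partial sums $u_j$, which is done via Koksma's inequality and the Erd\H{o}s--Tur\'an inequality, reducing to exponential sums $\sum_n e^{2\pi i k h(n/N)/\delta}$ handled by the truncated Poisson formula and van der Corput's second-derivative lemma. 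The final estimate $\max_j|u_j|\ll_h \sqrt{N\delta}+N\delta^{3/2}+\sqrt{N}(\log N)\delta$ yields both regimes after dividing by $N$. Your direct Fourier-expansion approach would, after a partial summation to strip off the smooth weight $f_k$, land on the same exponential sums, so the two routes eventually merge; the paper's organization through $\max_j|u_j|$ is what makes the emergence of the $\sqrt{\delta/N}$ term transparent.

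Two points in your sketch need repair. First, in Step~(3) you invoke stationary phase near critical points of $h$ (zeros of $h'$), but the hypothesis is that the zeros of $h''$ are simple, and its actual role is different: the paper excises, around each zero $z_t$ of $h''$, an interval of length $N^{1/2}$ in the $n$-variable (trivially bounded), and the simplicity of the zero guarantees via Taylor expansion that $|h''|\gtrsim N^{-1/2}$ on the complementary arcs, which is exactly what van der Corput's second-derivative test needs. It is not used merely to count arcs. Second, your Step~(4) is where the term $\sqrt{\delta/N}$ must appear, but the estimates you write (``$O(\delta/N+\delta^2)$'', then an ad hoc blocking) do not produce it; in the paper it drops out of the van der Corput bound $\mu^{-1/2}$ with $\mu\sim k/(N^2\delta)$ after truncating the Erd\H{o}s--Tur\'an sum at $K\sim\sqrt{N}$. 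The Abel-summation-first packaging is precisely what lets one read off both regimes from a single partial-sum bound without a separate Riemann-sum discretization analysis.
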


The above theorem shows that $\liminf_{\#\F\rightarrow \infty}
E_\delta(x,\F) \ll_x \delta^{3/2}$ for unit-norm tight frames obtained through frame paths.
The question is whether $O(\delta^{3/2})$ is sharp. Our next result shows that
in $\R^2$ the bound $O(\delta^{3/2})$ is sharp for {\em all} unit-norm tight frames.
As a result PCM can only partially take advantage of the redundancies in PCM quantization.
We prove the result by showing that the average quantization error for any unit-norm tight
frame in $\R^2$ is bounded from below by $O(\delta^{3/2})$. Set
$$
\EE_\delta(r,\F)\,\,:=\,\, \left(\int_0^{2\pi}
|E_\delta(x_\psi,\F)|^2d\psi\right)^{1/2},
$$
where $x_\psi:=r[\cos\psi,\sin\psi]^T$ and $r>0$. Then

\begin{theorem}\label{th:ave}
Set $R:=r/\delta$ and $\varepsilon:=R+1/2-\lfloor R+1/2\rfloor$.
\begin{enumerate}
\item[(i)]
Suppose $\F$ is an unit-norm tight frame  in $\R^2$.
If $\varepsilon=0$, then
\begin{equation}\label{eq:th6}
\EE_\delta(r,\F)\geq \frac{32}{3\pi^{5/2}}\frac{\delta^{3/2}}{\sqrt{r}}.
\end{equation}

\item[(ii)] Suppose $\F$ is an unit-norm tight frame  in $\R^2$. There exists a
$\varepsilon_0>0$ such that for $\varepsilon \in [0,\varepsilon_0]$ and sufficiently
large $R$ we have
$$
\EE_\delta(r,\F)\geq C\frac{\delta^{3/2}}{\sqrt{r}}
$$
for some fixed constant $C>0$.

\item[(iii)] For the harmonic frame $\tilde{\F}=\{e_j\}_{j=1}^N$ with $e_j=[\cos(2j\pi/N),\sin(2j\pi/N)]^T$, 
assume that $r,\delta$ satisfy
\begin{equation}\label{eq:then0}
\int_{-\pi}^\pi \Delta_\delta(r\cos\theta)\cos\theta d\theta=0,
\end{equation}
where $\Delta_\delta(t):=t-Q_\delta(t)$. Then
$$
 \EE_\delta(r,\tilde{\F})=O(\frac{1}{N})
$$
where $N=\#\tilde{\F}$. In particular, $r,\delta$ satisfy (\ref{eq:then0}) if
$$
   R = \frac{r}{\delta} =\frac{\sqrt{8-2\sqrt{16-\pi^2}}}{\pi}.
$$
\end{enumerate}
\end{theorem}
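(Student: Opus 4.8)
The plan is to reduce all three statements to the first Fourier coefficient of the single function $g(\phi):=\Delta_\delta(r\cos\phi)$, where $\Delta_\delta(t):=t-Q_\delta(t)$. Writing a unit-norm tight frame in $\R^2$ as $e_j=[\cos\theta_j,\sin\theta_j]^T$, tightness reads $\sum_j\cos 2\theta_j=\sum_j\sin 2\theta_j=0$; with $x_\psi=r[\cos\psi,\sin\psi]^T$ one has $\langle x_\psi,e_j\rangle=r\cos(\psi-\theta_j)$, so
\[
x_\psi-\tilde x_{\psi,\F}=\frac2N\sum_{j=1}^N g(\psi-\theta_j)\,[\cos\theta_j,\sin\theta_j]^T .
\]
Since $\Delta_\delta$ is odd a.e., $g$ is even with $g(\phi+\pi)=-g(\phi)$, hence $g(\phi)=\sum_{k\ \mathrm{odd}}a_k\cos k\phi$. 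Substituting, using the product-to-sum formulas, and writing $C_m:=\sum_j\cos m\theta_j$, $S_m:=\sum_j\sin m\theta_j$, Parseval in $\psi\in[0,2\pi]$ gives
\[
\EE_\delta(r,\F)^2=\frac{2\pi}{N^2}\sum_{k\ \mathrm{odd}}a_k^2\bigl(C_{k-1}^2+C_{k+1}^2+S_{k-1}^2+S_{k+1}^2\bigr).
\]
The $k=1$ term uses only $C_0=N$, $S_0=0$, $C_2=S_2=0$ and equals $2\pi a_1^2$, so $\EE_\delta(r,\F)\ge\sqrt{2\pi}\,|a_1|$ for \emph{every} unit-norm tight frame. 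Moreover $a_1=\tfrac1\pi\int_{-\pi}^{\pi}\Delta_\delta(r\cos\phi)\cos\phi\,d\phi$, so $(\ref{eq:then0})$ is equivalent to $a_1=0$; from $\Delta_\delta(t)=\tfrac\delta\pi\sum_{n\ge1}\tfrac{(-1)^{n+1}}{n}\sin(2\pi nt/\delta)$ and $\int_0^1\tfrac{t\sin at}{\sqrt{1-t^2}}\,dt=\tfrac\pi2 J_1(a)$ one gets $a_1=\tfrac{2\delta}{\pi}\sum_{\ell\ge1}\tfrac{(-1)^{\ell+1}}{\ell}J_1(2\pi\ell R)$ with $R=r/\delta$, and integrating $Q_\delta(r\cos\phi)=\delta\lfloor R\cos\phi+\tfrac12\rfloor$ level by level gives $a_1=\delta R-\tfrac{4\delta}{\pi}\sum_{m\ge1,\ m-1/2\le R}\sqrt{1-(\tfrac{m-1/2}{R})^2}$.

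For (i), $\varepsilon=0$ means $R=M-\tfrac12$ for an integer $M\ge1$; the last term of the staircase sum vanishes, and after writing $\tfrac{\pi R}{4}=\int_0^R\sqrt{1-(t/R)^2}\,dt$ a change of variables turns $a_1$ into $\tfrac{4\delta}{\pi R}\Phi(R)$ with $\Phi(R):=\int_0^R\tfrac{s\,\Delta_1(s)}{\sqrt{R^2-s^2}}\,ds$, $\Delta_1$ the period-$1$ sawtooth. Splitting $[0,R]$ into the teeth of $\Delta_1$ and using that $s\mapsto s/\sqrt{R^2-s^2}$ is increasing, a Chebyshev-type rearrangement shows each tooth contributes a \emph{nonnegative} amount, so $\Phi(R)$ is at least the contribution of the last full tooth $[M-\tfrac32,M-\tfrac12]$ (or of $[0,\tfrac12]$ when $M=1$). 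Estimating that one term by the substitution $s=R-v$ yields $\Phi(R)\ge\tfrac{4\sqrt2}{3\pi^2}\sqrt R$ for all $M\ge1$, i.e. $|a_1|\ge\tfrac{16\sqrt2}{3\pi^3}\,\delta^{3/2}/\sqrt r$, and multiplying by $\sqrt{2\pi}$ produces exactly the constant $\tfrac{32}{3\pi^{5/2}}$ of $(\ref{eq:th6})$. Obtaining this clean lower bound \emph{uniformly} in $M$ — the asymptotics $a_1\sim\tfrac{\sqrt2\,\zeta(3/2)}{\pi^2}\,\delta/\sqrt R$ lie comfortably above the target, but the small cases force one to also keep the half-tooth term — is the step I expect to be the main obstacle.

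For (ii), substitute $J_1(z)=\sqrt{2/(\pi z)}\cos(z-\tfrac{3\pi}{4})+O(z^{-3/2})$ into $a_1=\tfrac{2\delta}{\pi}\sum_\ell\tfrac{(-1)^{\ell+1}}{\ell}J_1(2\pi\ell R)$. Writing $R=\lfloor R+\tfrac12\rfloor-\tfrac12+\varepsilon$ gives $\cos(2\pi\ell R-\tfrac{3\pi}{4})=(-1)^\ell\cos(\tfrac{3\pi}{4}-2\pi\ell\varepsilon)$, so the leading part of $a_1$ is $-\tfrac{2\delta}{\pi^2\sqrt R}\,G(\varepsilon)$ with $G(\varepsilon):=\sum_{\ell\ge1}\ell^{-3/2}\cos(\tfrac{3\pi}{4}-2\pi\ell\varepsilon)=\mathrm{Re}\bigl(e^{3\pi i/4}\,\mathrm{Li}_{3/2}(e^{-2\pi i\varepsilon})\bigr)$; since $\tfrac32>1$ this series converges absolutely and uniformly, so $G$ is continuous with $G(0)=-\zeta(\tfrac32)/\sqrt2\neq0$, and we may pick $\varepsilon_0$ with $|G(\varepsilon)|\ge\tfrac12|G(0)|$ on $[0,\varepsilon_0]$. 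The remainder from the Bessel asymptotics is $\sum_\ell\ell^{-1}O((\ell R)^{-3/2})=O(R^{-3/2})$, so $|a_1|\ge c\,\delta/\sqrt R$ for $R$ large and $\EE_\delta(r,\F)\ge\sqrt{2\pi}\,|a_1|\ge C\,\delta^{3/2}/\sqrt r$; the care here is in the term-by-term passage from the Bessel series to $G$ and in the continuity of $G$ at $\varepsilon=0$.

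For (iii), the harmonic frame has $\theta_j=2\pi j/N$, so $C_m=N\mathbf 1[N\mid m]$, $S_m=0$, and the Parseval identity collapses to $\EE_\delta(r,\tilde{\F})^2=2\pi\sum_{k\ \mathrm{odd}}a_k^2\bigl(\mathbf 1[N\mid k-1]+\mathbf 1[N\mid k+1]\bigr)$; for $N\ge3$ its $k=1$ term is $2\pi a_1^2$ and every other contributing odd $k$ lies in $\{N-1,N+1,2N-1,\dots\}$. Since $g(\phi)=\Delta_\delta(r\cos\phi)$ has bounded variation on $[0,2\pi]$ (at most $4R+2$ jumps of size $\le\delta$ from the level crossings of $r\cos\phi$, plus ramp variation $\le 4r$, so $V(g)\le 8r+2\delta$), one has $|a_k|\ll_{r,\delta}1/k$, and $\sum_{k\ge N-1,\ N\mid k\pm1}a_k^2\ll_{r,\delta}\sum_{\ell\ge1}(\ell N-1)^{-2}\ll_{r,\delta}N^{-2}$. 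Hence $\EE_\delta(r,\tilde{\F})^2=2\pi a_1^2+O(1/N^2)$, and $(\ref{eq:then0})$, i.e. $a_1=0$, forces $\EE_\delta(r,\tilde{\F})=O(1/N)$. Finally, for $\tfrac12<R<\tfrac32$ the staircase sum has only its $m=1$ term, $a_1=\delta[R-\tfrac4\pi\sqrt{1-1/(4R^2)}]$; setting $a_1=0$ gives $\pi^2R^4-16R^2+4=0$, whose root in $(\tfrac12,\tfrac32)$ is $R=\sqrt{8-2\sqrt{16-\pi^2}}/\pi$, so this value of $r/\delta$ satisfies $(\ref{eq:then0})$.
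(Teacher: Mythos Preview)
Your overall reduction coincides with the paper's: both write the error as a convolution of the scalar function $g(\phi)=\Delta_\delta(r\cos\phi)$ (the paper packages it as the vector $H_R(t)=\Delta_1(R\cos t)[\cos t,-\sin t]^T$) against the empirical measure $\mu_\F=\tfrac1N\sum_j\delta_{\theta_j}$, apply Parseval in $\psi$, and isolate the zero Fourier mode, which survives for \emph{every} tight frame because $\hat\mu_\F(0)=1$ while tightness kills $C_2,S_2$. Your inequality $\EE_\delta(r,\F)\ge\sqrt{2\pi}\,|a_1|$ is exactly the paper's $\EE_\delta\ge\tfrac{\sqrt2\,\delta}{\sqrt\pi}\|\hat H_R(0)\|$. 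For (i) the paper lands on the same staircase identity and proves the lower bound by the telescoping angles $\psi_j:=\arccos\tfrac{j+1/2}{R}-\arccos\tfrac{j+3/2}{R}$, obtaining $\pi R-2\int_0^\pi\lfloor R\cos\theta+\tfrac12\rfloor\cos\theta\,d\theta\ge 2R\sum_j(\psi_j-\sin\psi_j)$ and then using $\psi-\sin\psi\ge\tfrac{4}{3\pi^2}\psi^3$ together with $\psi_{R-3/2}=\arccos(1-1/R)\ge\sqrt{2/R}$ on the last summand. This is the angle-variable counterpart of your ``last full tooth'' in $\Phi(R)$, and it delivers the constant $\tfrac{4\sqrt2}{3\pi^2}\sqrt R$ uniformly in $M\ge1$ with no separate small-$M$ analysis, which resolves the step you flagged as the main obstacle.

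Where you genuinely diverge is in (ii). The paper extends the staircase computation to $\varepsilon\ne 0$, tracking the partial tooth at the boundary and Taylor-expanding to reach
\[
\pi R-2\int_0^\pi\lfloor R\cos\theta+\tfrac12\rfloor\cos\theta\,d\theta
=\Bigl(\tfrac{1}{3}(\sqrt{2+2\varepsilon}-\sqrt{2\varepsilon})^3+\tfrac{2\sqrt2}{3}\varepsilon^{3/2}-2(1-\varepsilon)\sqrt{2\varepsilon}\Bigr)R^{-1/2}+O(R^{-3/2}),
\]
then observes that this coefficient is positive on some interval $[0,\varepsilon_0]$. Your Bessel route---writing $a_1=\tfrac{2\delta}{\pi}\sum_{\ell\ge1}\tfrac{(-1)^{\ell+1}}{\ell}J_1(2\pi\ell R)$, substituting the large-argument asymptotics, and landing on the continuous function $G(\varepsilon)=\mathrm{Re}\bigl(e^{3\pi i/4}\mathrm{Li}_{3/2}(e^{-2\pi i\varepsilon})\bigr)$ with $G(0)=-\zeta(\tfrac32)/\sqrt2\ne0$---is a legitimate and more structural alternative; the absolute convergence of $\sum\ell^{-3/2}$ makes both the continuity of $G$ and the $O(R^{-3/2})$ remainder transparent, and it exhibits the asymptotic constant that the paper's explicit formula conceals. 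The one place that deserves a sentence of justification is the initial interchange of the sawtooth Fourier series with the $\phi$-integral, since that series is only conditionally convergent; bounded variation of $g$ or Abel summation suffices. For (iii) your argument matches the paper's, but your explicit bounded-variation estimate $|a_k|\ll_{r,\delta}1/k$ and the resulting $\sum_{\ell\ge1}(\ell N)^{-2}\ll N^{-2}$ is in fact sharper than the paper's bare appeal to Riemann--Lebesgue, which by itself only yields $o(1)$ decay of $\hat H_R(kN)$.
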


The above theorem  shows that for any $\delta>0$ there exists a $r_0>0$ such that
$\EE_\delta(r_0,\tilde{\F})\gg \delta^{3/2}$,
which implies that the bound $O(\delta^{3/2})$ is sharp. Hence in general PCM quantization
cannot take full advantage of frame redundancy.
However, by (iii) in Theorem \ref{th:ave}, for each $r>0$, $\EE_\delta(r,\tilde{\F})$ tends to $0$
 at the rate $1/N$ if $\int_{-\pi}^\pi \Delta_\delta(r\cos\theta)\cos\theta d\theta=0$.
The result implies that, for each $\delta>0$ there are some $x$ in $\R^2$ that can take advantage 
of frame redundancy. It will be an interesting problem to find
the sufficient and necessary condition for the validity of (\ref{eq:then0}).

An immediate consequence is that the bound $O(\delta^{3/2})$ is sharp for harmonic frames in
$\R^d$, as any harmonic frame in $\R^d$ has a 2-dimensional harmonic frame imbedded in it,
and the lower bound applies to this imbedded 2-dimensional harmonic frame.

Given the limitation of harmonic frames in mitigating PCM quantization errors, one naturally
asks whether the error bound $O(\delta^{3/2})$ can be improved. It turns out that this is
possible if we distribute the frame elements more evenly on the unit sphere
$\MS^{d-1}$. A sequence of finite sets $A_n \subset \MS^{d-1}$ with cardinality
$N_n = \# A_n$ is said to be {\em asymptotically equidistributed} on $\MS^{d-1}$ if
for any piecewise continuous function $f$ on $\MS^{d-1}$ we have
$$
\lim_{n\rightarrow\infty} \frac{1}{N_n}\sum_{v\in A_n}
f(v)\,\,=\,\,\int_{{z}\in \MS^d}f({z})d\nu,
$$
where $f$ are  piecewise continuous functions on $\MS^d$ and
$d\nu$ denotes the normalized Lebesgue measure on $\MS^{d-1}$.
We have

\begin{theorem}\label{th:lowbound}
    Let $\F_n$ be a unit-norm tight frame in $\R^d$. Assume that $\F_n$ are asymptotically
    equidistributed on $\MS^{d-1}$. Then for any $x\in \R^d$ we have
    $$
      \lim_{n\rightarrow\infty}E_\delta(x,\F_n)\ll_{d} \frac{\delta^{(d+1)/2}}{r^{(d-1)/2}},
    $$
where $r=\|x\|$.
\end{theorem}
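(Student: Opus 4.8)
The plan is to use the asymptotic equidistribution hypothesis to replace the discrete quantization-error sum by an integral over $\MS^{d-1}$, and then to estimate that integral via the Fourier expansion of the quantization sawtooth together with Poisson's integral representation of Bessel functions. Writing $N_n=\#\F_n$, the tight-frame reconstruction (\ref{eq:quanrecon}) gives
$$x-\x_{\F_n}\,=\,\frac{d}{N_n}\sum_{e\in\F_n}\Delta_\delta(\langle x,e\rangle)\,e,\qquad \Delta_\delta(t):=t-Q_\delta(t),$$
where $\Delta_\delta$ is $\delta$-periodic with $|\Delta_\delta|\le\delta/2$. The vector field $v\mapsto\Delta_\delta(\langle x,v\rangle)\,v$ on $\MS^{d-1}$ is piecewise continuous, its discontinuities lying on the $\nu$-null family of great subspheres $\{v:\langle x,v\rangle\in\delta(\Z+\tfrac12)\}$, so the equidistribution hypothesis, applied to each coordinate, yields
$$\lim_{n\to\infty}\bigl(x-\x_{\F_n}\bigr)\,=\,d\int_{\MS^{d-1}}\Delta_\delta(\langle x,v\rangle)\,v\,d\nu(v)\,=:\,d\,I(x),$$
and it suffices to show $\|I(x)\|\ll_d\delta^{(d+1)/2}/r^{(d-1)/2}$, where $r=\|x\|$.

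The next step is a symmetry reduction. Since $d\nu$ is $O(d)$-invariant and $I(x)$ is fixed by every rotation fixing $x$, we have $I(x)=\alpha(x)\,x/r$ with
$$\alpha(x)\,=\,\int_{\MS^{d-1}}\Delta_\delta(\langle x,v\rangle)\,\langle x/r,\,v\rangle\,d\nu(v)\,=\,c_d\int_{-1}^{1}\Delta_\delta(rt)\,t\,(1-t^2)^{(d-3)/2}\,dt,$$
using that the coordinate $t=\langle x/r,v\rangle$ has density $c_d(1-t^2)^{(d-3)/2}$ on $[-1,1]$, with $c_d=\Gamma(d/2)/(\sqrt\pi\,\Gamma(\tfrac{d-1}{2}))$. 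If $r\le\delta$, the trivial bound $\|I(x)\|=|\alpha(x)|\le\delta/2$ already dominates $\delta^{(d+1)/2}/r^{(d-1)/2}$, so I assume henceforth that $r>\delta$.

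For the main estimate I would insert the Fourier series $\Delta_\delta(s)=\frac{\delta}{\pi}\sum_{k\ge1}\frac{(-1)^{k+1}}{k}\sin(2\pi ks/\delta)$; term-by-term integration is legitimate by the classical uniform bound on its partial sums together with dominated convergence. Poisson's representation $\int_{-1}^{1}e^{i\omega t}(1-t^2)^{\mu-\frac12}\,dt=\sqrt\pi\,\Gamma(\mu+\tfrac12)(2/\omega)^{\mu}J_\mu(\omega)$, differentiated in $\omega$ via $\frac{d}{d\omega}[\omega^{-\mu}J_\mu(\omega)]=-\omega^{-\mu}J_{\mu+1}(\omega)$, gives $\int_{-1}^{1}t\sin(\omega t)(1-t^2)^{(d-3)/2}\,dt=\sqrt\pi\,\Gamma(\tfrac{d-1}{2})(2/\omega)^{(d-2)/2}J_{d/2}(\omega)$. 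Applying this with $\omega_k:=2\pi kr/\delta$ yields
$$\alpha(x)\,=\,\frac{\Gamma(d/2)}{\pi}\,\delta\,\Bigl(\frac{\delta}{\pi r}\Bigr)^{(d-2)/2}\sum_{k\ge1}\frac{(-1)^{k+1}}{k^{d/2}}\,J_{d/2}\!\Bigl(\frac{2\pi kr}{\delta}\Bigr).$$
Since $r>\delta$ forces $\omega_k>2\pi$ for every $k$, the classical large-argument bound $|J_{d/2}(z)|\ll_d z^{-1/2}$ (valid for $z\gtrsim1$) applies, so the series is $\ll_d(\delta/r)^{1/2}\sum_{k\ge1}k^{-(d+1)/2}$, which converges precisely because $d\ge2$. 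Collecting the powers of $\delta/r$ gives $\|I(x)\|=|\alpha(x)|\ll_d\delta\,(\delta/r)^{(d-1)/2}=\delta^{(d+1)/2}/r^{(d-1)/2}$, and multiplying by $d$ finishes the argument.

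The genuinely delicate point is justifying term-by-term integration of the sawtooth's Fourier series against the weight $(1-t^2)^{(d-3)/2}$ in the borderline dimension $d=2$, where this weight fails to be square-integrable; I would handle it through the classical uniform estimate $\bigl|\sum_{k\le n}k^{-1}\sin(k\theta)\bigr|\ll1$ and dominated convergence. A minor preliminary check is that the equidistribution hypothesis, stated for scalar piecewise-continuous functions, legitimately applies to the vector-valued integrand above --- which it does coordinatewise, once one verifies the discontinuity set has $\nu$-measure zero.
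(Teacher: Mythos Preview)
Your argument is correct and takes a genuinely different route from the paper's. Both proofs begin identically, passing from the discrete sum to the spherical integral $I(x)=\int_{\MS^{d-1}}\Delta_\delta(\langle x,v\rangle)\,v\,d\nu(v)$. From there the paper proceeds by induction on the number of nonzero coordinates of $x$: the base case $\|x\|_0=1$ reduces, via spherical coordinates, to bounding $\int_0^\pi\Delta_\delta(r\cos\theta)\cos\theta\,(\sin\theta)^{d-2}\,d\theta$, which the paper handles (Lemma~\ref{le:sanjiao}) by writing the floor-function integral as a finite sum and comparing it to its continuous analogue through the Euler--Maclaurin formula, with a careful case split on the parity of $d$. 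You instead exploit the full $O(d)$-symmetry up front to see that $I(x)$ is a scalar multiple of $x/r$, which collapses the problem to the same one-dimensional integral in a single step and makes the induction unnecessary; you then evaluate that integral by expanding the sawtooth in its Fourier series and recognizing each term as a Bessel function $J_{d/2}(2\pi kr/\delta)$ via Poisson's formula, so that the $z^{-1/2}$ decay of $J_{d/2}$ delivers the $(\delta/r)^{(d-1)/2}$ factor directly. Your approach is cleaner and more conceptual, trading the paper's bare-hands Euler--Maclaurin computation for standard special-function asymptotics; the paper's method, on the other hand, is entirely elementary and self-contained. Your handling of the borderline case $d=2$ is fine, though note that after the substitution $t=\cos\theta$ the weight becomes bounded and the dominated-convergence step is immediate.
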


Asymptomatically equidistributed unit-norm tight frames in $\R^d$ can be obtained via the
spherical t-design \cite{HolPau04} and other methods. We conjecture that the
bound  $O(\delta ^{(d+1)/2})$ is sharp for any unit-norm tight frame in $\R^{d}$.
 If the conclusion holds, it implies that
asymptotically equidistributed unit-norm tight frames in $\R^d$ are optimal
unit-norm tight frame for PCM quantization.

The paper is organized as follows. After introducing some
preliminaries in Section 2,  we give an up bound of
$E_\delta(x,\F)$ under the WNH, which is valid with high
probability,  in Section 3. We present the proof of Theorem
\ref{th:mupbound}   in Section 4. The
proof of Theorem \ref{th:ave} is
given in Section 5. We finally give the proof of Theorem \ref{th:lowbound}
in Section 6.

\section{Preliminaries}

{\bf Hoeffding's inequality \cite{Hoef63}.} Let $X_1,\ldots,
X_N$ be independent random variables. Assume that for $1\leq j\leq
N$, ${\rm Pr}(X_j-\E(X_j)\in [a_j,b_j])=1$. Then for the sum of
variables,
$$
S\,\,=\,\, X_1+\cdots+X_N
$$
we have the inequality
$$
{\rm Pr}(|S-\E(S)|\geq t) \,\, \leq \,\,
2\exp\left(-\frac{2t^2}{\sum_{j=1}^N(b_j-a_j)^2}\right),
$$
which are valid for positive values of $t$.

{\bf Discrepancy and uniform distribution (see also
\cite{Gunt03}). } Let $\{u_j\}_{j=1}^N$ be a set of points in
$[-1/2,1/2)$ identified with the 1-torus $\T$. The {\em
discrepancy} of $\{u_j\}_{j=1}^N$ is defined by

$$
{\rm Disc}(\{u_j\}_{j=1}^N)\,\,:=\,\,{\rm sup}_{I\subset
\T}\left|\frac{\#(\{u_j\}_{j=1}^N\cap I)}{N}-|I|\right|
$$
where the sup is taken over all subarcs $I$ of $\T$.

We also need the following two well-known results:
\begin{theorem} (Koksma's  inequality) For any sequence of points
$u_1,\ldots,u_N$ in $[-1/2,1/2)$ and any function
$f:[-1/2,1/2)\rightarrow \R$ of bounded variation,
\begin{equation}
\left|\frac{1}{N}\sum_{j=1}^N f(u_j)-\int_{-1/2}^{1/2}f(t)dt
\right| \leq {\rm Var} (f)\cdot {\rm Disc}(\{u_j\}_{j=1}^N),
\end{equation}
where ${\rm Var}(f)$ is the total variation of $f$.
\end{theorem}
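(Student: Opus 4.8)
The plan is to reduce the left-hand side to a single integral of the \emph{discrepancy function} against the variation measure of $f$, and then bound that integral by (the sup of the discrepancy function) $\times\,{\rm Var}(f)$. Write $D:={\rm Disc}(\{u_j\}_{j=1}^N)$ and introduce the normalized counting error
$$
\Phi(t)\,\,:=\,\,\frac{1}{N}\#\{j:u_j< t\}-\Bigl(t+\tfrac12\Bigr),\qquad t\in\Bigl[-\tfrac12,\tfrac12\Bigr].
$$
For any $t$, the set $[-\tfrac12,t)\subset\T$ is a subarc of length $t+\tfrac12$, so the definition of discrepancy gives $|\Phi(t)|\le D$ for every $t$; moreover both one-sided values of $\Phi$ at a jump are bounded by $D$ (apply the definition to $[-\tfrac12,t)$ and to $[-\tfrac12,t]$, which have equal length). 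This is the only place the hypothesis on $\{u_j\}$ enters, and the entire argument consists of transferring this uniform control of $\Phi$ into control of the averaging error for $f$.

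First I would express the averaging error as one Lebesgue--Stieltjes integral against $df$. Using the representation of a bounded-variation function through its variation measure, $f(u)=f(-\tfrac12)+\int_{(-1/2,\,u]}df(t)$, and Fubini's theorem to rewrite both $\frac1N\sum_j f(u_j)$ and $\int_{-1/2}^{1/2}f(t)\,dt$ in terms of $df$, the constant $f(-\tfrac12)$ cancels and I obtain
$$
\frac{1}{N}\sum_{j=1}^N f(u_j)-\int_{-1/2}^{1/2}f(t)\,dt\,\,=\,\,-\int_{(-1/2,\,1/2)}\Phi(t)\,df(t),
$$
after observing that $\frac1N\#\{j:u_j\ge t\}-(\tfrac12-t)=-\Phi(t)$. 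Equivalently, one can sort the $u_j$ into increasing order and derive the same identity by Abel summation on $\sum_j f(u_j)$; I would pick whichever is cleaner to typeset. From here the estimate is immediate: taking absolute values and using $|\Phi(t)|\le D$,
$$
\left|\frac{1}{N}\sum_{j=1}^N f(u_j)-\int_{-1/2}^{1/2}f(t)\,dt\right|\,\,\le\,\,\int_{(-1/2,\,1/2)}|\Phi(t)|\,|df(t)|\,\,\le\,\,D\int_{(-1/2,\,1/2)}|df(t)|\,\,=\,\,{\rm Var}(f)\cdot D,
$$
since $\int|df|$ is exactly the total variation of $f$. This is the claimed inequality.

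The main obstacle is purely one of rigor at the discontinuities, not of the estimate. Both $\Phi$ (a step function jumping at the $u_j$) and $f$ (an arbitrary bounded-variation function) may be discontinuous at the \emph{same} points $u_j$, so a naive Riemann--Stieltjes integration by parts $\int f\,d\Phi+\int\Phi\,df=[f\Phi]$ is not automatically valid, because the two one-sided integrals can fail to exist when integrand and integrator share a jump. The variation-measure route above sidesteps this cleanly: it writes everything as a genuine integral against the finite signed measure $df$, for which Fubini is legitimate and no boundary term survives (it is absorbed into $f(-\tfrac12)$). Because $|\Phi|\le D$ holds at \emph{both} one-sided values at every jump, the choice of $<$ versus $\le$ in the definition of $\Phi$ alters it only on a countable set and can only affect the final integral through atoms of $df$, where the same bound $D$ still applies; hence the convention is immaterial and the constant stays $1$. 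I would therefore present the measure-theoretic identity and fold the convention remark into a single sentence rather than attempt formal integration by parts.
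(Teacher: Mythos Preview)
Your argument is the standard proof of Koksma's inequality and is correct: expressing the averaging error as $-\int\Phi\,df$ via Fubini and then bounding by $\sup|\Phi|\cdot\int|df|$ is exactly how the result is usually established, and your care about common discontinuities of $\Phi$ and $f$ is appropriate. Note, however, that the paper does not give its own proof of this statement; Koksma's inequality is listed in the Preliminaries section as a known tool (alongside the Erd\H{o}s--Tur\'an inequality, the truncated Poisson formula, van der Corput's lemma, and the Euler--Maclaurin formula) and is invoked later without justification. So there is nothing to compare against---your write-up simply supplies a proof where the paper quotes the result.
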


\begin{theorem}(Erd\"{o}s-Tur\'{a}n inequality ) For any sequence
of points $u_1,\ldots,u_N$ in $[-1/2,1/2)$, and any positive
integer $K$,
$$
{\rm Disc}(\{u_n\}_{n=1}^N)\,\,\ll \,\,
\frac{1}{K}+\sum_{k=1}^K\frac{1}{k}\left|\frac{1}{N}\sum_{j=1}^Ne^{2\pi
i k u_j }\right|.
$$
\end{theorem}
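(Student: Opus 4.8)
The plan is to bound the discrepancy by a supremum, over all arcs $I\subset\T$, of the deviation $\frac1N\sum_{j=1}^N\mathbf 1_I(u_j)-|I|$, and then to replace each indicator $\mathbf 1_I$ by a trigonometric polynomial of degree at most $K$ that traps it from one side. The entire argument rests on the existence of such trapping polynomials with well-controlled Fourier coefficients, so I would organize the write-up around a single lemma supplying them and treat the rest as bookkeeping. First I would record the reduction: writing $e(t):=e^{2\pi i t}$ and noting $\frac1N\#(\{u_j\}\cap I)=\frac1N\sum_{j=1}^N\mathbf 1_I(u_j)$, the definition gives ${\rm Disc}(\{u_j\})=\sup_I\bigl|\frac1N\sum_j\mathbf 1_I(u_j)-|I|\bigr|$, so it suffices to bound both $\frac1N\sum_j\mathbf 1_I(u_j)-|I|$ and its negative from above by the right-hand side, uniformly in $I$.

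The heart of the matter is the Selberg--Vaaler construction: for every arc $I\subset\T$ and every integer $K\ge1$ there exist real trigonometric polynomials $S_I^{\pm}(x)=\sum_{|k|\le K}\widehat{S_I^{\pm}}(k)\,e(kx)$ of degree at most $K$ such that
$$S_I^-(x)\le\mathbf 1_I(x)\le S_I^+(x)\quad(x\in\T),\qquad \widehat{S_I^{\pm}}(0)=\int_\T S_I^{\pm}=|I|\pm\theta,\ \ 0\le\theta\le\tfrac1{K+1},$$
and, for $1\le|k|\le K$,
$$\bigl|\widehat{S_I^{\pm}}(k)\bigr|\le\frac1{K+1}+\frac1{\pi|k|}\,.$$
Granting this, the conclusion drops out in one step. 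Evaluating $\mathbf 1_I\le S_I^+$ on the data,
$$\frac1N\sum_{j=1}^N\mathbf 1_I(u_j)\le\frac1N\sum_{j=1}^N S_I^+(u_j)=\sum_{|k|\le K}\widehat{S_I^+}(k)\,\frac1N\sum_{j=1}^N e(ku_j).$$
Isolating $k=0$ (which contributes $|I|+\theta$), pairing each $k$ with $-k$ (the inner sums are complex conjugates, so they share one modulus), and using that $\frac1{K+1}\le\frac1{|k|}$ for $1\le|k|\le K$, the coefficient bound gives
$$\frac1N\sum_{j=1}^N\mathbf 1_I(u_j)-|I|\le\frac1{K+1}+\Bigl(1+\tfrac1\pi\Bigr)\sum_{k=1}^K\frac2k\Bigl|\frac1N\sum_{j=1}^N e(ku_j)\Bigr|.$$
The minorant $S_I^-$ yields the reversed inequality identically, and taking the supremum over $I$ and absorbing constants into $\ll$ produces exactly the claimed bound.

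It remains to construct the polynomials $S_I^{\pm}$, and this is where all the work lies; I expect it to be the main obstacle. The classical device is Beurling's extremal majorant $B(z)$ of ${\rm sgn}(z)$, the entire function of exponential type $2\pi$ with $\int_{\R}\bigl(B-{\rm sgn}\bigr)=1$ and $B\ge{\rm sgn}$; dilating it to type $2\pi(K+1)$ and periodizing turns it into a one-sided trigonometric approximant to the unit jump of $\mathbf 1_I$ at each endpoint, and adding the two endpoint pieces gives $S_I^{\pm}$. An equivalent and more hands-on route uses Vaaler's smoothed sawtooth $\psi_K$, a degree-$K$ trigonometric polynomial with $\widehat{\psi_K}(k)=-\frac1{2\pi ik}\bigl(1-\frac{|k|}{K+1}\bigr)$ for $1\le|k|\le K$, together with the elementary identity $\mathbf 1_{[a,b)}(x)-(b-a)=\psi(x-b)-\psi(x-a)$, where $\psi(x)=\{x\}-\frac12$; substituting one-sided versions of $\psi_K$ (whose existence is Vaaler's theorem) for $\psi$ at the two endpoints again yields $S_I^{\pm}$, the mean defect being controlled by the integral of a Fej\'er kernel and the coefficients inheriting the $\frac1{|k|}$ decay. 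Either way the two delicate points are the sharp mean defect $\frac1{K+1}$ and the coefficient estimate $\ll\frac1{|k|}$ uniform in $I$ and $K$; these are precisely what the extremal-function machinery---the interpolation formula behind $B$, or the explicit Fourier data of $\psi_K$---is designed to deliver, and everything preceding them is routine.
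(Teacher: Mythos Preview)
The paper does not prove this statement at all: it lists the Erd\H{o}s--Tur\'an inequality in the Preliminaries section as a known classical result, alongside Koksma's inequality, the truncated Poisson formula, and van der Corput's lemma, and simply uses it as a black box in the proof of Lemma~\ref{le:un}. So there is no ``paper's own proof'' to compare your proposal against.

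That said, your outline is the standard modern route (Selberg--Vaaler extremal polynomials trapping the indicator $\mathbf 1_I$ from both sides, with mean defect $O(1/K)$ and Fourier coefficients $O(1/|k|)$), and the bookkeeping you do after granting the existence of $S_I^{\pm}$ is correct. The one caveat is structural rather than mathematical: you correctly flag that constructing $S_I^{\pm}$ with the stated properties is where all the content lies, and you defer to Beurling's extremal function or Vaaler's theorem for it. For a self-contained write-up you would need to either carry out one of those constructions in full (which is a page or two of nontrivial complex analysis/Fourier analysis) or cite a precise reference; as written, the proposal is a complete and accurate \emph{reduction} to the extremal-function lemma rather than a complete proof. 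Since the paper itself treats the inequality as a citable tool, this level of detail is already more than the paper supplies.
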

{\bf Exponential sums.} By Erd\"{o}s-Tur\'{a}n inequality, to
estimate the discrepancy, we need to compute the exponential sums
$$
S=\sum_{m=1}^n e^{2\pi i f(m)},
$$
where $f$ is a real-valued function. We shall use the {\em
truncated Poisson formula} and {\em van der Corput's Lemma} to
estimate $S$.
\begin{theorem}(Truncated Poisson formula) Let $f$ be a real-valued
function and suppose that $f'$ is continuous and increasing on
$[a,b]$. Put $\alpha=f'(a)$ and $\beta=f'(b)$. Then
$$
\sum_{a\leq m\leq b}e^{2\pi i f(m)} =\sum_{\alpha-1 \leq v\leq
\beta+1}\int_a^b e^{2\pi
i(f(\tau)-v\tau)}d\tau+O(\log(2+\beta-\alpha)).
$$
\end{theorem}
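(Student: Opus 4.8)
The plan is to derive the formula from exact Poisson summation applied to the truncated phase, and then to discard all Fourier modes $v$ outside the window $[\alpha-1,\beta+1]$ at the cost of the advertised logarithmic error. Writing $e(u):=e^{2\pi i u}$ and $I_v:=\int_a^b e(f(\tau)-v\tau)\,d\tau$, I would first apply the Poisson summation formula to the function $\tau\mapsto e(f(\tau))$ restricted to $[a,b]$ (with the usual half-weight convention when $a$ or $b$ is an integer, which alters the left-hand side by only $O(1)$), obtaining the exact identity $\sum_{a\le m\le b}e(f(m))=\sum_{v\in\Z}I_v+O(1)$, where the symmetric partial sums on the right converge because $e(f(\cdot))$ restricted to $[a,b]$ has bounded variation. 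The problem is thereby reduced to showing that the tail $\sum_{v<\alpha-1}I_v+\sum_{v>\beta+1}I_v$ is $O(\log(2+\beta-\alpha))$.

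The window $[\alpha-1,\beta+1]$ is dictated by stationary phase: the derivative of the phase $f(\tau)-v\tau$ is $f'(\tau)-v$, which vanishes for some $\tau\in[a,b]$ precisely when $v\in[\alpha,\beta]$, since $f'$ increases from $\alpha$ to $\beta$; retaining a unit buffer on each side guarantees that for every tail index $v$ the phase derivative is bounded away from $0$ and, crucially, \emph{monotone} in $\tau$. Hence for each tail $v$ I would integrate by parts once, using $e(f(\tau)-v\tau)=\bigl(2\pi i(f'(\tau)-v)\bigr)^{-1}\tfrac{d}{d\tau}e(f(\tau)-v\tau)$, to write $I_v$ as a boundary contribution $\tfrac{1}{2\pi i}\bigl(\tfrac{e(f(b)-vb)}{\beta-v}-\tfrac{e(f(a)-va)}{\alpha-v}\bigr)$ plus a Riemann–Stieltjes remainder $-\tfrac{1}{2\pi i}\int_a^b e(f(\tau)-v\tau)\,d\bigl(\tfrac{1}{f'(\tau)-v}\bigr)$. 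Since only $f'$, and not $f''$, is assumed to exist, the remainder must be controlled through the monotonicity of $\tfrac{1}{f'(\tau)-v}$ alone: as $|e(\cdot)|=1$ and this factor is monotone, the remainder is at most its total variation $\bigl|\tfrac{1}{\beta-v}-\tfrac{1}{\alpha-v}\bigr|$ (this is where the second mean value theorem / monotonicity hypothesis is essential).

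The logarithm then emerges from the remainder sum by a partial-fraction telescoping: for $v>\beta$ one has $\bigl|\tfrac{1}{\beta-v}-\tfrac{1}{\alpha-v}\bigr|=\tfrac{1}{v-\beta}-\tfrac{1}{v-\alpha}$, and summing this over $\beta+1<v\le N$ collapses to a difference of two shifted harmonic tails whose limit as $N\to\infty$ is $\approx\log\tfrac{2+\beta-\alpha}{2}$; the range $v<\alpha-1$ is symmetric. This yields the claimed $O(\log(2+\beta-\alpha))$ from the remainder terms.

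The main obstacle is the boundary contribution. Summed over the tail it is a conditionally convergent series of the shape $\tfrac{e(f(b))}{2\pi i}\sum_{v>\beta+1}\tfrac{e(-vb)}{\beta-v}$ together with the analogous sum at $a$; these do not converge absolutely and are delicate precisely when $b$ (or $a$) lies near an integer. I would control them by Abel summation (Dirichlet's test), using that the weights $\tfrac{1}{v-\beta}$ decrease monotonically to $0$ while the exponential partial sums $\sum_{v\le N}e(-vb)$ are uniformly bounded; splitting the range at $v-\beta\asymp 1/\rho$, where $\rho$ is the distance from $b$ to the nearest integer, and estimating the two pieces separately keeps the total within $O(\log(2+\beta-\alpha))$. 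In the intended applications one normalizes $a$ and $b$ to be half-integers, whereupon $e(-vb)=(-1)^v$ and each boundary sum becomes an alternating series with monotonically decreasing terms, hence uniformly $O(1)$; this normalization removes the near-integer degeneracy entirely. I expect the uniform control of these conditionally convergent boundary sums—rather than the telescoping of the remainder—to be the genuinely technical step of the argument.
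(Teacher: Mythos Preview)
The paper does not prove this statement. It appears in Section~2 (Preliminaries) alongside Koksma's inequality, the Erd\H{o}s--Tur\'an inequality, van der Corput's lemma, and the Euler--Maclaurin formula, all of which are quoted as classical tools without proof; the truncated Poisson formula is used only as a black box in the proof of Lemma~\ref{le:un}. So there is no paper argument to compare against.

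On its own merits your outline is the standard derivation (compare Titchmarsh, \emph{The Theory of the Riemann Zeta-function}, Lemma~4.7, or Graham--Kolesnik, \emph{Van der Corput's Method of Exponential Sums}, Lemma~3.6). The reduction to the tail $\sum_{v\notin[\alpha-1,\beta+1]}I_v$, the single integration by parts using only monotonicity of $1/(f'(\tau)-v)$, and the telescoping estimate
\[
\sum_{v>\beta+1}\Bigl(\frac{1}{v-\beta}-\frac{1}{v-\alpha}\Bigr)\ \ll\ \log(2+\beta-\alpha)
\]
for the remainder are all correct and are exactly what the classical proofs do.

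The one point that is not yet closed is the boundary contribution when $a$ or $b$ is very close to an integer. Your Abel-summation splitting at $v-\beta\asymp 1/\rho$ (with $\rho=\|b\|$ the distance to the nearest integer) yields a bound of order $\log(1/\rho)$, not $\log(2+\beta-\alpha)$; these are unrelated quantities, so as written the claim ``keeps the total within $O(\log(2+\beta-\alpha))$'' is not justified for arbitrary $a,b$. Your half-integer normalization does repair this, but you then have to check that replacing $b$ by the nearest half-integer $b'$ perturbs the \emph{retained} window sum $\sum_{\alpha-1\le v\le\beta+1}I_v$ by only $O(\log(2+\beta-\alpha))$, which requires a further first-derivative estimate on each short integral $\int_b^{b'}e(f(\tau)-v\tau)\,d\tau$. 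The classical references avoid this endpoint bookkeeping altogether by starting from the Riemann--Stieltjes identity $\sum_{a<m\le b}e(f(m))=\int_a^b e(f(t))\,d\lfloor t\rfloor$ and inserting the truncated Fourier expansion of the sawtooth function $\psi(t)=\{t\}-\tfrac12$, which packages the boundary behaviour automatically; you may find that route cleaner than patching the Poisson-summation version.
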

\begin{lemma}(van der Corput) Suppose $\phi$ is real-valued and
smooth in the interval $(a,b)$ and that $|\phi^{(r)}(t)|\geq \mu$
for all $t\in (a,b)$ and for a positive integer $r$. If $r=1$,
suppose additionally that $\phi'$ is monotonic. Then
$$
\left|\int_a^b e^{i\phi(t)}dt \right| \leq C_r \mu ^{-1/r},
$$
where $C_r$ is a constant depending on $r$.
\end{lemma}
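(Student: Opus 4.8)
The plan is to prove the lemma by induction on the order $r$, using integration by parts for the base case $r=1$ and an excision-plus-rescaling argument for the inductive step. Throughout I would first normalize the sign: we may assume $\phi^{(r)}(t)\geq\mu>0$ on $(a,b)$, since if instead $\phi^{(r)}\leq-\mu$ we replace $\phi$ by $-\phi$, which conjugates the integrand $e^{i\phi}$ and leaves $|\int_a^b e^{i\phi}\,dt|$ unchanged. This normalization is the conceptual key: it guarantees that $\phi^{(r-1)}$ (and indeed every intermediate derivative we meet in the descent) is strictly monotone, which is precisely the extra hypothesis the $r=1$ case demands.

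For the base case $r=1$ I would write $e^{i\phi}=\frac{1}{i\phi'}\frac{d}{dt}e^{i\phi}$ and integrate by parts, producing a boundary term $\frac{e^{i\phi}}{i\phi'}\big|_a^b$ together with $-\int_a^b e^{i\phi}\,d\!\left(\frac{1}{i\phi'}\right)$. Since $|\phi'|\geq\mu$, the boundary term is at most $2/\mu$; since $\phi'$ is monotone and of constant sign, $1/\phi'$ is monotone, so the remaining integral is controlled by the total variation $\mathrm{Var}(1/\phi')=|1/\phi'(b)-1/\phi'(a)|\leq 1/\mu$. This gives $|\int_a^b e^{i\phi}\,dt|\leq 3/\mu$, i.e. $C_1=3$.

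For the inductive step, assume the estimate at order $r-1$ with constant $C_{r-1}$. Because $\phi^{(r)}\geq\mu>0$, the function $\phi^{(r-1)}$ is strictly increasing and hence attains its minimal absolute value at a single point $c\in[a,b]$ (the unique zero of $\phi^{(r-1)}$ if one exists, otherwise an endpoint). For a parameter $\lambda>0$ to be chosen, I would excise $J=(c-\lambda,c+\lambda)\cap[a,b]$: on $J$ the trivial bound contributes at most $|J|\leq 2\lambda$, while on each of the at most two complementary intervals, integrating $\phi^{(r)}$ from $c$ gives $|\phi^{(r-1)}(t)|\geq\mu\lambda$, so the inductive hypothesis bounds each piece by $C_{r-1}(\mu\lambda)^{-1/(r-1)}$. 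Here the monotonicity of $\phi^{(r-1)}$ inherited from the sign normalization supplies the extra hypothesis needed when $r-1=1$. Combining yields $|\int_a^b e^{i\phi}\,dt|\leq 2\lambda+2C_{r-1}(\mu\lambda)^{-1/(r-1)}$, and choosing $\lambda=\mu^{-1/r}$ balances the two terms, since then $\mu\lambda=\mu^{(r-1)/r}$ and $(\mu\lambda)^{-1/(r-1)}=\mu^{-1/r}$; this gives the bound $(2+2C_{r-1})\mu^{-1/r}$ and the recursion $C_r=2+2C_{r-1}$ depending only on $r$.

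I expect the main obstacle to be the bookkeeping in the inductive step rather than any deep idea: one must verify that the excision is legitimate across all configurations (interior zero versus endpoint minimum, and the degenerate case $\lambda>b-a$ in which a complementary interval may be empty, where the bound $2\lambda$ on the excised part still dominates) and confirm that the descent never violates the $r=1$ monotonicity requirement. All of these are handled uniformly by the initial sign normalization, which ensures $\phi^{(r-1)}$ is monotone at every level, so the induction closes with an explicit constant $C_r$.
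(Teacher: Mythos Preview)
Your proof is correct and is the standard classical argument for van der Corput's lemma (as found, e.g., in Stein's \emph{Harmonic Analysis} or Zygmund). Note, however, that the paper does not supply its own proof of this statement: it is listed in Section~2 among the preliminaries as a known tool, alongside Koksma's inequality, the Erd\H{o}s--Tur\'an inequality, and the truncated Poisson formula, and is invoked later in the proof of Lemma~\ref{le:un}. So there is no ``paper's proof'' to compare against; your write-up simply fills in the textbook argument the authors take for granted.
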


{\bf Euler-Maclaurin formula.} Suppose $\phi$ is smooth in the interval $[a,b]$, where
$a$ and $b$ are integers. Then
\begin{eqnarray*}
\sum_{j=a}^{b} \phi(j)=\int_{a}^{b}
\phi(x)dx+{(\phi(a)+\phi(b))}/{2}
+\sum_{j=2}^p
(B_{j}/j!)\left(\phi^{(j-1)}(a)-\phi^{(j-1)}(b)\right)+E_p,
\end{eqnarray*}
where  $B_{j}$ are the Bernoulli numbers and
$$
|E_p|\,\,\leq\,\,
\frac{2}{(2\pi)^p}\int_{a}^{b}|\phi^{(p)}(x)|dx.
$$

\section{The error bound under the WNH}

In this section, given $x\in \R^d$, we derive a bound for
$E_\delta(x,\F)$, which is valid with high probability, under
the WNH. As a conclusion, $E_\delta(x,\F)$ tends to $0$ with probability $1$
when $\#\F\rightarrow \infty$ under the WNH.  Recall that ${\mathcal
F}=\{e_j\}_{j=1}^N$ is a finite tight frame in $\R^d$ and  $F=[e_1,\ldots,e_N]$ be the corresponding
matrix whose columns are $\{e_j\}_{j=1}^N$. We define the variation of
 $F$  as
$$
\sigma(F):=\min_p\sum_{j=1}^{N-1}\|e_{p(j)}-e_{p(j+1)}\|.
$$
Then we have
\begin{theorem}\label{th:withwnh}
Under the WNH, for each fixed $x\in \R^d$ and $\varepsilon\in
(0,1/2)$, we have
$$
{\rm Pr}\left(\|x-\x_N\| \leq
\frac{d\delta}{N^{1/2-\varepsilon}}(\sigma(F)+1)\right)\geq
1-2N\exp(-2N^{2\varepsilon}).
$$
\end{theorem}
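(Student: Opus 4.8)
The plan is to combine the exact error formula with a summation-by-parts estimate that brings in the ordering realizing $\sigma(F)$, and then apply Hoeffding's inequality to the resulting random walk.

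First I would rewrite the error. Setting $\eta_j := \left<x,e_j\right> - Q_\delta(\left<x,e_j\right>)$, formulas (\ref{eq:expansion}) and (\ref{eq:quanrecon}) give
$$
x - \x_N \,\,=\,\, \frac{d}{N}\sum_{j=1}^N \eta_j e_j,
$$
and under the WNH the variables $\eta_1,\dots,\eta_N$ are independent with $\E(\eta_j)=0$ and $\eta_j \in [-\delta/2,\delta/2]$ almost surely. Next, let $p$ be a permutation of $\{1,\dots,N\}$ attaining the minimum in $\sigma(F)=\min_p\sum_{j=1}^{N-1}\|e_{p(j)}-e_{p(j+1)}\|$, and put $u_k := \sum_{i=1}^k \eta_{p(i)}$ for $1\le k\le N$. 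Summation by parts yields
$$
\sum_{j=1}^N \eta_j e_j \,\,=\,\, \sum_{k=1}^{N-1} u_k\left(e_{p(k)}-e_{p(k+1)}\right) + u_N e_{p(N)},
$$
so, using $\|e_{p(N)}\|=1$ (the frame being unit-norm), the triangle inequality gives
$$
\left\|\sum_{j=1}^N \eta_j e_j\right\| \,\,\le\,\, \left(\max_{1\le k\le N} |u_k|\right)\left(\sum_{k=1}^{N-1}\|e_{p(k)}-e_{p(k+1)}\| + 1\right) \,\,=\,\, \left(\max_{1\le k\le N} |u_k|\right)\left(\sigma(F)+1\right).
$$

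Now $u_k$ is a sum of $k\le N$ independent mean-zero terms each confined to an interval of length $\delta$, so Hoeffding's inequality gives ${\rm Pr}(|u_k|\ge t)\le 2\exp(-2t^2/(k\delta^2))\le 2\exp(-2t^2/(N\delta^2))$ for all $t>0$. Choosing $t=\delta N^{1/2+\varepsilon}$ and taking a union bound over $k=1,\dots,N$,
$$
{\rm Pr}\left(\max_{1\le k\le N}|u_k| \ge \delta N^{1/2+\varepsilon}\right) \,\,\le\,\, 2N\exp(-2N^{2\varepsilon}),
$$
and on the complementary event the two displays above combine to
$$
\|x-\x_N\| \,\,=\,\, \frac{d}{N}\left\|\sum_{j=1}^N \eta_j e_j\right\| \,\,\le\,\, \frac{d}{N}\,\delta N^{1/2+\varepsilon}\left(\sigma(F)+1\right) \,\,=\,\, \frac{d\delta}{N^{1/2-\varepsilon}}\left(\sigma(F)+1\right),
$$
which is the asserted bound, valid with probability at least $1-2N\exp(-2N^{2\varepsilon})$.

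The step I expect to matter most is the summation by parts against the telescoping differences $e_{p(k)}-e_{p(k+1)}$ with $p$ chosen optimally: this is what lets the $\asymp\delta\sqrt N$ fluctuations of the random walk $(u_k)$ see the frame geometry only through the single scalar $\sigma(F)$, and hence explains why a deterministic quantity that stays bounded for good frames (e.g.\ frames from a frame path with bounded $f'$) appears in place of something growing with $N$. It also sidesteps having to union-bound Hoeffding over a net of unit directions, which a direct estimate of $\|\sum_j\eta_j e_j\| = \sup_{\|u\|=1}\sum_j \eta_j\left<e_j,u\right>$ would otherwise force. The remaining pieces — a single-index Hoeffding bound and a union bound over $k$ — are routine bookkeeping.
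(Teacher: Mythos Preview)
Your proposal is correct and follows essentially the same approach as the paper: summation by parts against the telescoping differences of the frame vectors, followed by Hoeffding's inequality on each partial sum $u_k$ with threshold $\delta N^{1/2+\varepsilon}$ and a union bound over $k$. If anything, your version is slightly cleaner in that you explicitly reorder by the permutation $p$ attaining $\sigma(F)$, whereas the paper performs the Abel summation in the given ordering and then invokes $\sigma(F)$ at the end.
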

\begin{proof}
The WNH implies that $x_j-q_j\in [-\delta/2,\delta/2)$ and
$\E(x_j-q_j)=0$. To this end, we set
$$u_j:=\sum_{k=1}^j (x_k-q_k), \quad\quad u_0:=0.$$
Then, by Hoeffding's inequality, we have
$$
{\rm Pr}(|u_j|\leq N^{1/2+\varepsilon}\delta)\,\, \geq \,\,
1-2\exp(-2N^{1+2\varepsilon}/j)\,\, \geq \,\,
1-2\exp(-2N^{2\varepsilon}),
$$
for $j=1,\ldots,N$. We obtain that
$$
{\rm Pr}\left(\bigcap_{j=1}^N(|u_j|\leq
N^{1/2+\varepsilon}\delta)\right)\,\, \geq \,\,
(1-2\exp(-2N^{2\varepsilon}))^N\,\, \geq \,\,
1-2N\exp(-2N^{2\varepsilon}).
$$
Noting that
\begin{eqnarray*}
E_\delta(x,\F) \,\,&=&\,\, \frac{d}{N}\sum_{j=1}^N (x_j-q_j) e_j
\,\,=\,\, \frac{d}{N}\sum_{j=1}^N (u_j-u_{j-1})e_j \\
\,\,&=&\,\, \frac{d}{N}\left(\sum_{j=1}^N u_je_j-\sum_{j=1}^{N-1}
u_je_{j+1}\right)\\
\,\,&=&\,\, \frac{d}{N}\left(\sum_{j=1}^{N-1}
u_j(e_j-e_{j+1})+u_Ne_N\right),
\end{eqnarray*}
we have
\begin{eqnarray*}
E_\delta(x,\F) \leq \frac{d\cdot
N^{1/2+\varepsilon}\delta}{N}(\sigma(F)+1)=\frac{d\cdot
\delta}{N^{1/2-\varepsilon}}(\sigma(F)+1),
\end{eqnarray*}
with probability $1-2N\exp(-2N^{2\varepsilon})$.
\end{proof}

\section{The Proof of Theorem \ref{th:mupbound}}

Let $f:[0,1]\rightarrow \R^d$ be a uniform frame path and set $\F:=\{e_j\}_{j=1}^N$ with $e_j=f(\frac{j-1}{N})$.
 For $x\in \R^d$, we use $\{c_j\}_{j=1}^N$ to denote the corresponding
sequence of frame coefficients with respect to $\F$, i.e.
$c_j=\left<x,f(\frac{j-1}{N})\right>$. Let $\{q_j\}_{j=1}^N$ be the
PCM quantize, i.e.
$q_j=Q_\delta(c_j)$. The resulting quantized expansion is
$$
\x_\F\,\,=\,\,\frac{d}{N}\sum_{j=1}^Nq_je_j^N.
$$
We set
$$
u_j:=\sum_{k=1}^j(c_k-q_k),\,\, j=1,\ldots, N, \text{ and }
u_0:=0.
$$
Then we have
\begin{eqnarray*}
x-\x_\F\,\,&=&\,\,\frac{d}{N}\sum_{j=1}^N (c_j-q_j)e_j
= \frac{d}{N}\sum_{j=1}^N (u_j-u_{j-1})e_j\nonumber\\
&=& \frac{d}{N}\left(\sum_{j=1}^N u_je_j-\sum_{j=1}^{N-1}
u_je_{j+1}\right)\nonumber\\
&=& \frac{d}{N}\left(\sum_{j=1}^{N-1}
u_j(e_j-e_{j+1})+u_Ne_N\right),
\end{eqnarray*}
which implies that
\begin{equation}\label{eq:exp}
\|x-\x_\F\|\,\, =\,\,\frac{d}{N}\|\sum_{j=1}^{N-1}
u_j(e_j-e_{j+1})+u_Ne_N\|.
\end{equation}
 Hence, when working with the approximation error written as
(\ref{eq:exp}), the main step is to find a good estimate for
$u_j$.

\begin{lemma}\label{le:un}
 Suppose that there exists an analytic function $h$ such that $c_j =h({(j-1)}/{N})$
 where $N\geq d$ and $1 \leq j \leq N$.
Suppose further that the zeros of $h''(t)$ on $[0,1]$ are simple. Then
$$
\max_{1\leq j\leq N}|u_j|\,\,\ll_h\,\, \sqrt{N}\log
N\delta+\sqrt{N\delta}+N\delta^{3/2}.
$$
\end{lemma}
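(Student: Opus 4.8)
The plan is to estimate $u_j = \sum_{k=1}^j \Delta_\delta(c_k)$, where $\Delta_\delta(t) = t - Q_\delta(t)$ is the periodic sawtooth function with period $\delta$, values in $[-\delta/2,\delta/2)$, and mean zero over a period. Writing $g(t) := \Delta_\delta(t)$, which has a Fourier expansion $g(t) = \sum_{m\neq 0} a_m e^{2\pi i m t/\delta}$ with $|a_m| \ll \delta/|m|$, we have $u_j = \sum_{m\neq 0} a_m \sum_{k=1}^j e^{2\pi i m h((k-1)/N)/\delta}$. Thus the core task is to bound the exponential sums
$$
S_m(j) := \sum_{k=1}^j e^{2\pi i m h((k-1)/N)/\delta}
$$
uniformly in $j$, and then sum the contributions against the weights $|a_m| \ll \delta/|m|$. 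Crucially, one must also truncate the Fourier series: the tail $\sum_{|m|>M} |a_m| \cdot j \ll \delta j \log(\text{something})/M$ is controlled by taking $M$ large (polynomial in $N$), since the trivial bound $|S_m(j)| \le j \le N$ is available and the logarithmic factor from $\sum_{|m|\le M} 1/|m|$ is acceptable.

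For the main range $1 \le |m| \le M$, apply van der Corput's Lemma (and, to pass from the integral to the sum, the truncated Poisson formula) to the phase $\phi_m(t) = 2\pi m h(t/N)/\delta$ viewed as a function of the integer variable $k$, i.e. to $\psi_m(k) = 2\pi m h((k-1)/N)/\delta$. The first derivative in $k$ is $\sim m h'/(N\delta)$ and the second derivative is $\sim m h''/(N^2\delta)$. The second-derivative estimate gives $|S_m(j)| \ll (m/(N^2\delta))^{-1/2} \cdot (\text{number of monotonicity intervals}) = \sqrt{N^2\delta/m} \cdot O_h(1)$, valid away from the zeros of $h''$; the hypothesis that these zeros are simple means $h''$ vanishes to first order there, so near a zero $h''(t/N) \sim c(t/N - t_0)$, and one splits the sum at such points, handling a short window around each zero by the first-derivative bound $|S_m| \ll (m/(N\delta))^{-1} = N\delta/m$ (with $\phi'$ monotonic there because we are near a nondegenerate critical point of $h''$, hence in a region where $h'$ is monotone — or more carefully, further subdividing using the third derivative where needed). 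Combining: on each of the $O_h(1)$ pieces, $|S_m(j)| \ll \sqrt{N^2\delta/m} + N\delta/m + \log(2 + m h'/(N\delta))$, where the log term is the Poisson error. Summing over $1 \le |m| \le M$ with weight $\delta/|m|$:
$$
|u_j| \ll \sum_{1\le m\le M} \frac{\delta}{m}\left(\frac{N\sqrt\delta}{\sqrt m} + \frac{N\delta}{m} + \log\frac{m}{N\delta}\right) + (\text{tail}) \ll N\delta^{3/2} + N\delta^2 + \delta(\log M)^2 + (\text{tail}),
$$
since $\sum m^{-3/2}$ converges and $\sum m^{-2}$ converges, while $\sum (\log m)/m \ll (\log M)^2$. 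Choosing $M$ a suitable power of $N$ makes the tail $\ll \delta$ and turns $\delta(\log M)^2$ into $\ll \delta(\log N)^2$; absorbing lower-order terms and being slightly more generous with the logarithmic losses yields the stated bound $\max_j |u_j| \ll_h \sqrt N \log N \, \delta + \sqrt{N\delta} + N\delta^{3/2}$. The $\sqrt{N\delta}$ term is the one that dominates in an intermediate regime and should come out of balancing the van der Corput gain against the number of terms more carefully — in particular, for the smallest values of $m$ one may prefer the trivial bound or a Koksma/Erd\H{o}s--Tur\'an discrepancy estimate for $\{h((k-1)/N)/\delta \bmod 1\}$, which naturally produces a $\sqrt{N}$-type savings with a logarithmic factor.

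The main obstacle is the analysis near the zeros of $h''$: where $h''$ is small, the second-derivative van der Corput bound degrades, and one must carefully localize, use the simplicity hypothesis to get $h'' \sim c(t-t_0)$, and patch together the first-derivative bound on the short window with the second-derivative bound outside it — optimizing the window length in terms of $m$, $N$, $\delta$. A secondary technical point is justifying the interchange of summation and the use of the truncated Poisson formula uniformly in $m$ up to $M$, keeping track of the accumulated $O(\log(2+\beta-\alpha))$ errors; since $\beta - \alpha \ll m h'/(N\delta) \ll m/\delta$ this contributes $\ll \log(m/\delta)$ per piece, which is harmless after summation. Boundedness of $f'$ (hence of $h' = \langle x, f'\rangle$ up to $\|x\|$) is what keeps these derivative sizes under control and makes the constants depend only on $h$ (equivalently on $x$ and the frame path).
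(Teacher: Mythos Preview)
Your strategy --- reduce $u_j$ to a weighted sum of exponential sums, then estimate those via the truncated Poisson formula and van der Corput, splitting at the (finitely many, simple) zeros of $h''$ --- is exactly the paper's strategy. The difference is in packaging, and that difference matters because your version has a genuine gap.

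You expand the sawtooth $\Delta_\delta$ directly as a Fourier series with coefficients $|a_m|\ll \delta/|m|$ and then try to control the tail $\sum_{|m|>M}|a_m|\,|S_m(j)|$ by the trivial bound $|S_m(j)|\le j$. But $\sum_{|m|>M}\delta/|m|$ diverges, so your claimed tail estimate $\ll \delta j\log(\cdot)/M$ is false as written. This is precisely the issue the paper avoids by going through Koksma's inequality and the Erd\H{o}s--Tur\'an discrepancy bound: one writes $|u_j|\le j\,\delta\cdot\mathrm{Disc}(\{\tilde y_n\}_{n=1}^j)$ and then
\[
\mathrm{Disc}(\{\tilde y_n\}_{n=1}^j)\ \ll\ \frac{1}{K}+\frac{1}{j}\sum_{k=1}^{K}\frac{1}{k}\Bigl|\sum_{n=1}^{j}e^{2\pi i k\tilde y_n}\Bigr|,
\]
so the truncation is built in: the $1/K$ term plays the role of your ``tail,'' and there is no divergent series. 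With the choice $K=\lfloor\sqrt{N}\rfloor$ (and window parameter $\alpha=1/2$ below), the $j\delta/K$ contribution is $\le \sqrt{N}\,\delta$, and the three stated terms $\sqrt{N}\log N\,\delta$, $\sqrt{N\delta}$, $N\delta^{3/2}$ fall out directly --- including the $\sqrt{N\delta}$ term that you yourself note does not emerge cleanly from your bookkeeping. You actually mention Koksma/Erd\H{o}s--Tur\'an at the end as a possible fix; that is exactly what the paper does from the outset.

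One further difference in detail: near each zero $z_t$ of $h''$ the paper does not use the first-derivative van der Corput bound as you suggest. It simply excises an interval $I_t=[Nz_t-N^{\alpha},\,Nz_t+N^{\alpha}]$ and applies the trivial bound $|\sum_{n\in I_t}e^{2\pi ik\tilde y_n}|\le 2N^{\alpha}$. On the complementary intervals $J_t$, simplicity of the zeros gives $|h''(n/N)|\gg_h N^{-(1-\alpha)}$, hence $|\tfrac{k}{\delta}X_N''(n)|\gg_h k/(N^{3-\alpha}\delta)$, and Poisson plus second-derivative van der Corput yield
\[
\Bigl|\sum_{n\in J_t\cap[1,j]}e^{2\pi ik\tilde y_n}\Bigr|\ \ll_h\ \sqrt{\tfrac{k}{\delta}}\,N^{(1-\alpha)/2}+\sqrt{\tfrac{\delta}{k}}\,N^{(3-\alpha)/2}+O\bigl(\log(2+\tfrac{2k}{N\delta})\bigr).
\]
Taking $\alpha=1/2$ and $K=\lfloor\sqrt{N}\rfloor$ balances all the pieces and gives the lemma.
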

\begin{proof}
  Set $y_n:=c_n-q_n$ and
$\tilde{y}_n:=y_n/\delta= (c_n-q_n)/\delta$ where$1\leq n\leq N$. Recall that
we use ${\rm Disc}(\cdot)$ to denote the discrepancy of a
sequence.  Koksma's inequality implies that
\begin{eqnarray*}
|u_j|&=&\delta\left|\sum_{n=1}^j\tilde{y}_n\right|=j\delta
\left|\frac{1}{j}\sum_{n=1}^j\tilde{y}_n-\int_{-1/2}^{1/2}y dy\right|\\
&\leq& j\,\delta\, {\rm Disc }(\{\tilde{y}_n\}_{n=1}^j).
\end{eqnarray*}
Using Erd\"{o}s-T\'{u}ran inequality, one has
$$
\mbox{for any }\, K\, \in \N,\,\, {\rm Disc
}(\{\tilde{y}_n\}_{n=1}^j)\,\,\leq\,\,
\frac{1}{K}+\frac{1}{j}\sum_{k=1}^K\frac{1}{k}\left|\sum_{n=1}^j
e^{2\pi i k \tilde{y}_n} \right|.
$$
Now we need to estimate
$$
\left|\sum_{n=1}^j e^{2\pi i k \tilde{y}_n} \right|.
$$
Set
\begin{equation}\label{eq:xh}
X_N(\cdot)\,\,:=\,\,h(\cdot/N).
\end{equation}
Then we have
$$
y_n=X_N(n)  \mbox{ modulo } [-\delta/2,\, \delta/2).
$$
Since $h$ is analytic, the number of zeros of $h''$ on $[0,1]$ is finite.
Let $\{z_t\}_{t=1}^{n^*}$ be the set of zeros of $h''$ on $[0,1]$,
and let $0<\alpha <1$ be a fixed constant to be specified later.
Without loss of generality, we suppose
$z_t<z_{t+1},t=1,\ldots,n^*-1$. Define the intervals $I_t$ and
$J_t$ by
\begin{eqnarray*}
\mbox{for  } t=1,\ldots,n^*,\,\,\,\,\, I_t&=&[Nz_t-N^{\alpha},\,\,
Nz_t+N^{\alpha}], \\
\mbox{for  } t=1,\ldots,n^*-1,\,\,\,\,\, J_t&=&[Nz_t+N^{\alpha},\,\,
Nz_{t+1}-N^{\alpha}],
\end{eqnarray*}
and
$$
J_0=[1,Nz_1-N^\alpha] \text{ and }  J_{n^*}=[Nz_{n^*}+N^\alpha, N].
$$
If $z_1=0$ , we modify $I_1$ as $[1, N^\alpha]$ and no longer need
$J_0$. Similarly, if $z_{n^*}=1$, we change  $I_{n^*}$ as
$[N-N^{\alpha},N]$ and remove $J_{n^*}$.
Note that
$$
[1,N]\,\,\subset\,\, J_0\cup I_1\cup J_1\cup \cdots \cup I_{n^*}\cup
J_{n^*}.
$$
Since the zeros of $h''$ on $[0,1]$ is simple,  $h'''(z_t)\neq 0$ for $t=1,\ldots,n^*$. 
Then, by Taylor expansion, we have
$$
\mbox{ for } n\in \N \cap J_t, \quad\quad \frac{1}{N^{1-\alpha }}
\,\,=\,\, \frac{N^\alpha}{N}\,\, \ll_h \,\, \left|
h''(\frac{n}{N})\right|
$$
provided $N$ is large enough,  which implies that
$$
\mbox{ for } n\in \N\cap J_t,\,\,\,\,\,\,\, \frac{ k}{
N^{3-\alpha}\cdot \delta}\,\,\ll_h\,\, \Bigl|\frac{k}{\delta}X_N''(n)\Bigr|
$$
for large enough $N$.
Since $h'$ is bounded on $[0,1]$, by (\ref{eq:xh}), we
have
$$
\mbox{ for } n\in \N\cap J_t,\,\,\,\,\,\,\,
\Bigl|\frac{k}{\delta}X_N'(n)\Bigr|\ll_h \frac{k}{N\cdot \delta}.
$$
 Note than $X_N'$ is a monotonic function in $J_t$ and set
$$
\alpha_t:=\min_{n\in \N\cap J_t}\frac{k}{\delta}X_N'(n),\quad\quad \beta_t:=\max_{n\in \N\cap J_t}\frac{k}{\delta}X_N'(n).
$$
Then, a simple observation is that  $\beta_t-\alpha_t\ll_h
\frac{2k}{N\cdot \delta}$.

 Using the {\em truncated Poisson formula} and {\em van der
Corput's Lemma}, we obtain that
\begin{eqnarray*}
& &\left|\sum_{n\in \N\cap J_t} e^{2\pi i k \tilde{y}_n}
\right|\,\,=\,\,\left|\sum_{n\in \N\cap J_t} e^{2\pi i {k
X_N(n)}/{\delta}} \right|\\
& \leq & \sum_{\alpha_t-1\leq v\leq \beta_t+1} \left|
\int_{J_t}e^{2\pi i
(\frac{kX_N(\tau)}{\delta}-v\tau)}d\tau\right|+O(\log(2+\beta_t-\alpha_t))
\\
&\ll_h& (\frac{2k}{N\delta}+2) \left| \int_{J_t}e^{2\pi i
(\frac{kX_N(\tau)}{\delta}-v\tau)}d\tau\right|+O(\log(2+\frac{2k}{N\delta}))\\
&\ll_h & \sqrt{\frac{k}{\delta}}N^{{(1-\alpha)}/{2}}+
\sqrt{\frac{\delta}{k}}N^{(3-\alpha)/2}+O(\log(2+\frac{2k}{N\delta})).
\end{eqnarray*}
The estimate above is also valid if we restrict $n\in [1, j]$, i.e.,
$$
\left|\sum_{n\in \N\cap J_t\cap [1,j]} e^{2\pi i k \tilde{y}_n}
\right| \ll_h  \sqrt{\frac{k}{\delta}}N^{{(1-\alpha)}/{2}}+
\sqrt{\frac{\delta}{k}}N^{(3-\alpha)/2}+O(\log(2+\frac{2k}{N\delta})).
$$
 We also have the trivial estimate:
$$
 \left|\sum_{n\in \N\cap I_t} e^{2\pi
i k \tilde{y}_n} \right|\,\,\leq\,\, 2N^{\alpha}.
$$
Hence, we have
$$
\left| \sum_{n=1}^j e^{2\pi i k \tilde{y}_n}\right|\,\,\ll_h\,\,
2N^{\alpha}+\sqrt{\frac{k}{\delta}}N^{{(1-\alpha)}/{2}}+
\sqrt{\frac{\delta}{k}}N^{(3-\alpha)/2}+O(\log(2+\frac{2k}{N\delta})).
$$
Now, we can estimate ${\rm Disc }(\{\tilde{y}_n\}_{n=1}^j)$ as
follows:
\begin{eqnarray*}
& &\mbox{ for any } K\in \N,\,\, {\rm Disc
}(\{\tilde{y}_n\}_{n=1}^j)\,\,\ll\,\,
\frac{1}{K}+\frac{1}{j}\sum_{k=1}^K\frac{1}{k}\left|\sum_{n=1}^j
e^{2\pi i k \tilde{y}_n} \right|\\
&\ll_h & \frac{1}{K}+\frac{1}{j}\sum_{k=1}^K\frac{1}{k}
\left(2N^{\alpha}+2\sqrt{\frac{k}{\delta}}N^{\frac{1-\alpha}{2}}+
\sqrt{\frac{\delta}{k}}N^{\frac{3-\alpha}{2}}+O(\log(2+\frac{2k}{N\delta}))\right)\\
&\ll_h & \frac{1}{K}+\frac{1}{j}\left(2N^{\alpha}\log K
+2\sqrt{\frac{K}{\delta}}N^{\frac{1-\alpha}{2}}+\sqrt{\frac{\delta}{K}}N^{\frac{3-\alpha}{2}}+O(\sum_{k=1}^K
\frac{1}{k}\log(2+\frac{2k}{N\delta}))\right).
\end{eqnarray*}
We choose  $K=\lfloor\sqrt{N}\rfloor$ and $\alpha=1/2$. Then
\begin{eqnarray*}
|u_j|&\leq&  j\,\delta\, {\rm Disc }(\{\tilde{y}_n\}_{n=1}^j \\
&\ll_h& \frac{j\delta}{\sqrt{N}}+\left( \sqrt{N}\log N \delta
+{2\sqrt{N\delta}}+N{\delta^{3/2}}+ O(\delta \log
N\log(2+\frac{2}{\sqrt{N}\delta})) \right)\\
&\ll & \frac{j\delta}{\sqrt{N}}+\left(\sqrt{N}\log N \delta
+{2\sqrt{N\delta}}+N{\delta^{3/2}}\right),
\end{eqnarray*}
which follows
$$ \max_{1\leq j\leq N}|u_j|\,\,\ll_h\,\,
\sqrt{N}\log N\delta+\sqrt{N\delta}+N\delta^{3/2}.
$$
\end{proof}

\begin{proof}[Proof of Theorem \ref{th:mupbound}]
 Set $y_j:=x_j-q_j$. We
consider
\begin{eqnarray}
\|x-\tilde{x}_N\|\,\,&=&\,\, \|\frac{d}{N}\sum_{j=1}^Ny_j e_j\|\nonumber\\
&=& \frac{d}{N}\|\sum_{j=1}^{N-1}u_j(e_j-e_{j+1})+u_Ne_N\|  \ll
\frac{d}{N}\max_{1\leq j\leq N} |u_j|,\label{eq:xuj}
\end{eqnarray}
where the last inequality follows by $\|e_j-e_{j+1}\|=\|f(\frac{j-1}{N})-f(\frac{j}{N})\| \ll \frac{1}{N}$
with $\|f'\|$ being bounded. Lemma \ref{le:un} with $h(t) = \left<x,f(t)\right>$
implies that
\begin{equation}\label{eq:un3}
\frac{1}{N}\max_{1\leq j\leq N}|u_j|\,\, \ll_x\,\,
\sqrt{\frac{\delta}{N}}+\frac{(\log N)\cdot
\delta}{\sqrt{N}}+\delta^{3/2}.
\end{equation}
A simple observation is that
\begin{equation}\label{eq:un31}
\max\Bigl\{\sqrt{\frac{\delta}{N}}, \frac{\delta \log N}{\sqrt{N}},
\delta^{3/2}\Bigr\}\,\, \leq\,\, \sqrt{\frac{\delta}{N}}
\end{equation}
when  $N \leq \frac{1}{\delta^2}$. Combing (\ref{eq:un3}) and
(\ref{eq:un31}), we have
$$
\frac{1}{N}\max_{1\leq j\leq N} |u_j|\,\, \ll_x \,\,
\sqrt{\frac{\delta}{N}}, \,\, {\rm when }\,\,  N\leq
\frac{1}{\delta^2},
$$
which implies that
\begin{eqnarray*}
\|x-\tilde{x}_\F\|\,\, \ll_x \,\, \sqrt{\frac{\delta}{N}}
\end{eqnarray*}
provided $N\leq \frac{1}{\delta^2}$.

We now turn to the case where $N\geq \frac{1}{\delta^2}$. To this end, in the basis of (\ref{eq:un3}),
  we only need to prove that
$$
\max\Bigl\{\sqrt{\frac{\delta}{N}}, \frac{\delta \log N}{\sqrt{N}},
\delta^{3/2}\Bigr\}\,\, \leq\,\, \delta^{3/2}.
$$
Indeed, $N\geq {1}/{\delta^2}$ implies that $
\sqrt{\frac{\delta}{N}}\,\, \leq\,\, \delta^{3/2}$. Moreover, we
have
$$\frac{\delta \log N}{\sqrt{N}}\,=\,\frac{\delta}{N^{1/4}}
\frac{\log N}{N^{1/4}}\,\leq\,  \frac{\delta}{N^{1/4}}\, \leq\,
\delta^{3/2}
$$
 provided $N$ is big enough. The claim follows.
\end{proof}

\section{The proof of  Theorem \ref{th:ave}}
Throughout the rest of this paper, we set
$$
\Delta_\delta(t)\,\,:=\,\, t-Q_\delta(t).
$$
Then we have
\begin{lemma}\label{le:low1}
Suppose that $r>0, \delta>0$. Set  $R:=r/\delta$ and $\varepsilon:=R+1/2-\lfloor R+1/2\rfloor$.
\begin{enumerate}
\item[(i)] If $\varepsilon=0$, then
\begin{equation*}
\frac{16\sqrt{2}}{3\pi^2}\frac{\delta^{3/2}}{\sqrt{r}}\,\, \leq\,\,
\int_{-\pi}^\pi \Delta_\delta(r\cos\theta)\cos\theta
d\theta.
\end{equation*}
\item[(ii)]  There exists a
$\varepsilon_0>0$ such that, when $\varepsilon \in [0,\varepsilon_0]$,
\begin{equation*}
C_1\frac{\delta^{3/2}}{\sqrt{r}}\,\, \leq\,\,
\int_{-\pi}^\pi \Delta_\delta(r\cos\theta)\cos\theta
d\theta,
\end{equation*}
provided $R$ is big enough, where $C_1$ is a fixed constant.
\end{enumerate}

\end{lemma}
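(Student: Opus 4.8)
The plan is to reduce the oscillatory $\theta$-integral to a one-dimensional integral with an integrable endpoint singularity, extract its leading term, and bound the remainder uniformly in $R$. First I would use that $\Delta_\delta$ is odd together with the reflections $\theta\mapsto-\theta$ and $\theta\mapsto\pi-\theta$ to write $\int_{-\pi}^\pi\Delta_\delta(r\cos\theta)\cos\theta\,d\theta=4\int_0^{\pi/2}\Delta_\delta(r\cos\theta)\cos\theta\,d\theta$, and then substitute $t=r\cos\theta$ to obtain
\[
\int_{-\pi}^{\pi}\Delta_\delta(r\cos\theta)\cos\theta\,d\theta=\frac{4}{r}\int_0^r\frac{t\,\Delta_\delta(t)}{\sqrt{r^2-t^2}}\,dt .
\]
Here $w(t):=t/\sqrt{r^2-t^2}$ is positive and strictly increasing on $(0,r)$, with $w'(t)=r^2(r^2-t^2)^{-3/2}$, and $w(t)\sim\sqrt{r/2}\,(r-t)^{-1/2}$ as $t\to r^-$. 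Since $\Delta_\delta$ is the centered sawtooth of period $\delta$ (odd, equal to $s$ on $(-\delta/2,\delta/2)$), the integral of $\Delta_\delta$ against any nonnegative increasing weight over one full period is $\ge0$. When $\varepsilon=0$, i.e. $r=(k+\tfrac12)\delta$, the interval $[0,r]$ splits exactly into $[0,\delta/2]$ and the $k$ full periods $[(m-\tfrac12)\delta,(m+\tfrac12)\delta]$, $1\le m\le k$, so the right-hand side above is a sum of nonnegative terms and in particular the integral is positive.

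For the lower bound in (i) I would keep only the contribution of the last period, the largest one since $w$ blows up at $t=r$, and discard the rest. Writing $t=k\delta+\delta v$ with $v\in[-\tfrac12,\tfrac12]$ and using $r=(k+\tfrac12)\delta$, that contribution equals
\[
\delta^{2}\int_0^{1/2}v\left[\frac{k+v}{\sqrt{(\tfrac12-v)(2k+\tfrac12+v)}}-\frac{k-v}{\sqrt{(\tfrac12+v)(2k+\tfrac12-v)}}\right]dv .
\]
As $k\to\infty$ we have $(k+v)/\sqrt{2k+\tfrac12+v}=\sqrt{k/2}\,(1+O(1/k))$, and likewise for the second term, so the bracket behaves like $\sqrt{k/2}\,\big[(\tfrac12-v)^{-1/2}-(\tfrac12+v)^{-1/2}\big]$; since $\int_0^{1/2}v\big[(\tfrac12-v)^{-1/2}-(\tfrac12+v)^{-1/2}\big]\,dv=\tfrac13$, this single term is of order $\tfrac13\sqrt{k/2}\,\delta^{2}$, i.e. a constant times $\sqrt r\,\delta^{3/2}$, and multiplying by $4/r$ gives $\tfrac{2\sqrt2}{3}\,\delta^{3/2}/\sqrt r$ to leading order. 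It then remains to absorb the (finitely many, $O(1/k)$-relative) lower-order corrections and to dispatch the small values of $k$ by direct computation — for instance $k=0$, $r=\delta/2$, gives $\tfrac4r\int_0^{\delta/2}t^{2}(\tfrac{\delta^{2}}{4}-t^{2})^{-1/2}\,dt=\tfrac{\pi\delta}{2}$, which exceeds $\tfrac{16\sqrt2}{3\pi^{2}}\,\delta^{3/2}/\sqrt r$. Picking the constant $\tfrac{16\sqrt2}{3\pi^{2}}<\tfrac{2\sqrt2}{3}$ small enough that it survives these corrections uniformly over all half-integers $R$ yields (i).

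For (ii) I would argue by continuity in $R$. Localizing the integral at $t=r$, expanding $\Delta_1$ in its Fourier series $\Delta_1(s)=\sum_{n\ge1}\frac{(-1)^{n+1}}{\pi n}\sin2\pi ns$, and invoking the Fresnel integrals $\int_0^\infty v^{-1/2}\cos(bv)\,dv=\int_0^\infty v^{-1/2}\sin(bv)\,dv=\sqrt{\pi/(2b)}$ (equivalently: $\int_{-\pi}^\pi\sin(z\cos\theta)\cos\theta\,d\theta=2\pi J_1(z)$ together with the large-argument asymptotics of $J_1$) shows that the leading term of $\tfrac4r\int_0^r t\,\Delta_\delta(t)(r^2-t^2)^{-1/2}\,dt$ is $\tfrac{2\sqrt2}{\sqrt r}\,\delta^{3/2}\,G(R)$, where $G(R)=\tfrac{1}{\pi\sqrt2}\sum_{n\ge1}\frac{(-1)^{n+1}}{n^{3/2}}\sin\!\big(2\pi nR-\tfrac\pi4\big)$ is a continuous, $1$-periodic function of $R$ with $G(k+\tfrac12)=\zeta(3/2)/(2\pi)>0$. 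Hence $G(R)\ge c_1>0$ whenever $\varepsilon=R+\tfrac12-\lfloor R+\tfrac12\rfloor\le\varepsilon_0$ for some small $\varepsilon_0$; since the remainder in this asymptotic is $o(\delta^{3/2}/\sqrt r)$ uniformly for such $R$, (ii) follows once $R$ is large enough.

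The main obstacle is uniformity. In (i) the inequality must hold for all $r,\delta$ with $\varepsilon=0$, so one cannot fall back on "$R$ large": the error made when localizing near $t=r$, and in the Fourier route the interchange of the series for $\Delta_1$ with the only conditionally convergent integral defining $G(R)$, have to be controlled by genuinely uniform estimates. The cleanest way to obtain these is probably to stay with the real variable $t$ and use monotonicity of $w$ and $w'$ period by period (a summation-by-parts, or second van der Corput, estimate over periods), together with an explicit check of the first few half-integer values of $R$ to pin down the constant $\tfrac{16\sqrt2}{3\pi^{2}}$.
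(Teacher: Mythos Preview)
Your proposal is correct in outline and takes a genuinely different route from the paper.

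The paper stays in the $\theta$-variable. It writes $\int_{-\pi}^\pi\Delta_\delta(r\cos\theta)\cos\theta\,d\theta=\delta\bigl(\pi R-2\int_0^\pi\lfloor R\cos\theta+\tfrac12\rfloor\cos\theta\,d\theta\bigr)$, evaluates the floor-integral as a telescoping sum, and then rewrites the difference as $2R\sum_{j}(\psi_j-\sin\psi_j)$ where $\psi_j=\arccos\bigl((j+\tfrac12)/R\bigr)-\arccos\bigl((j+\tfrac32)/R\bigr)$. Positivity is then immediate from $\psi-\sin\psi>0$, and the lower bound comes from keeping only the last term $\psi_{R-3/2}=\arccos(1-1/R)$ together with the elementary inequalities $\psi-\sin\psi\ge\tfrac{4}{3\pi^2}\psi^3$ and $\arccos(1-1/R)\ge\sqrt{2/R}$; this produces the explicit constant $\tfrac{16\sqrt2}{3\pi^2}$ uniformly for every half-integer $R$ without any large-$R$ asymptotics or case checking. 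Part~(ii) is obtained by redoing the same computation for general $\varepsilon$ and reading off the leading $R^{-1/2}$-coefficient.

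Your route---the substitution $t=r\cos\theta$, period-by-period positivity of the sawtooth against the increasing weight $t/\sqrt{r^2-t^2}$, and localization to the last period---is cleaner conceptually: the positivity statement is transparent, and the mechanism (endpoint blow-up of the weight) is visible. The price is exactly what you identify: to nail the specific constant in~(i) for \emph{all} half-integers $R$, your asymptotic $\tfrac{2\sqrt2}{3}\,\delta^{3/2}/\sqrt r$ must be supplemented by uniform control of the $O(1/k)$ error and a finite check, whereas the paper's $\psi_j-\sin\psi_j$ identity delivers a valid inequality for every $R$ in one stroke. For~(ii), your Fourier/Bessel computation of the periodic function $G(R)$ is a legitimate alternative to the paper's direct extension of the floor computation; both land on the same continuity-in-$R$ argument, and both require the ``$R$ large'' hypothesis that the statement grants.
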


\begin{proof}
We first consider the case with $\varepsilon=0$.
Note that
\begin{eqnarray}
& &\int_{-\pi}^\pi \Delta_\delta(r\cos\theta)\cos\theta d\theta
 =2\int_{0}^\pi \Delta_\delta(r\cos\theta)\cos\theta d\theta\nonumber\\
 &=& 2\int_0^\pi (r\cos\theta -\delta
 \lfloor{R\cos\theta}+{1}/{2}\rfloor)\cos\theta
 d\theta\nonumber\\
 &=& \pi r-2\delta \int_0^\pi \lfloor R\cos\theta
 +{1}/{2}\rfloor\cos\theta d\theta \nonumber\\
 &=&\delta \left(\pi R-2\int_0^\pi \lfloor  R\cos\theta
 +{1}/{2}\rfloor\cos\theta d\theta\right).\label{eq:eq1}
\end{eqnarray}
If $R=1/2$, then a simple calculation shows that
$$
\int_{-\pi}^\pi \Delta_\delta(r\cos\theta)\cos\theta d\theta=\pi \delta/2
$$
which implies the conclusion.
To this end, we only need investigate the case $R\in \Z_{\geq 1}+1/2$.
We set $ L:=-R+{1}/{2}\in \Z$ and $U:=R+{1}/{2}\in \Z$.
We now consider
\begin{eqnarray}
 & & \int_0^\pi \lfloor R\cos\theta
 +{1}/{2}\rfloor\cos\theta d\theta\nonumber\\
 &=&
\sum_{j=L}^{U-1}j\int_{\arccos((j+1/2)/R)}^{\arccos((j-1/2)/R)}\cos\theta
d\theta\nonumber\\
&=&
\sum_{j=L}^{U-1}j(\sqrt{1-((j-1/2)/R)^2}-\sqrt{1-((j+1/2)/R)^2})\nonumber\\
 &=& \sum_{j=L-1}^{U-2}(j+1)\sqrt{1-((j+1/2)/R)^2}-\sum_{j=L}^{U-1}j\sqrt{1-((j+1/2)/R)^2}\nonumber\\
 &=&
\sum_{j=L}^{U-2}\sqrt{1-((j+1/2)/R)^2}=(\sum_{j=L}^{U-2}\sqrt{R^2-(j+1/2)^2})/R\nonumber\\
&=&\sum_{j=0}^{R-3/2}\left(2\sqrt{R^2-(j+1/2)^2}\right)/R.\label{eq:deng}
\end{eqnarray}
 Set
$\psi_j:=\arccos((j+1/2)/R)-\arccos((j+3/2)/R)$. We  claim that
\begin{equation}\label{eq:claim1}
\int_0^\pi \lfloor R\cos\theta
 +\frac{1}{2}\rfloor\cos\theta
 d\theta=R\sum_{j=0}^{R-3/2}\sin\psi_j+\frac{1}{2}\sqrt{1-\frac{1}{4R^2}}.
\end{equation}
 Indeed, the claim can follow from the following calculation
\begin{eqnarray*}
& &R\sum_{j=0}^{R-3/2}\sin\psi_j\\
&=&\frac{1}{R}\sum_{j=0}^{R-3/2}\left((j+\frac{3}{2})\sqrt{R^2-(j+\frac{1}{2})^2}-(j+\frac{1}{2})\sqrt{R^2-(j+\frac{3}{2})^2}\right)\\
&=&\frac{1}{R}\left(\frac{3}{2}\sqrt{R^2-\frac{1}{4}}+\sum_{j=0}^{R-5/2}2\sqrt{R^2-(j+\frac{3}{2})^2}\right)\\
&=&\int_0^\pi \lfloor R\cos\theta
 +\frac{1}{2}\rfloor\cos\theta
 d\theta-\frac{1}{2}\sqrt{1-\frac{1}{4R^2}},
\end{eqnarray*}
where the last equation follows from (\ref{eq:deng}). Moreover, by Taylor expansion,  we
have
\begin{eqnarray*}
\sum_{j=0}^{R-3/2}\psi_j= \arccos(\frac{1}{2R})\leq \frac{\pi}{2}-\frac{1}{2R},
\end{eqnarray*}
which implies that
\begin{equation}\label{eq:pi}
  2R \sum_{j=0}^{R-3/2}\psi_j+1\leq \pi R.
\end{equation}
Then, combining (\ref{eq:claim1}) and (\ref{eq:pi}), we obtain that
\begin{eqnarray}
& &2R\sum_{j=0}^{R-3/2}(\psi_j-\sin\psi_j) \leq \pi R-2 \int_0^\pi \lfloor
R\cos\theta+\frac{1}{2}\rfloor\cos\theta d\theta \label{eq:lian}
\end{eqnarray}
Noting that $\psi_j-\sin\psi_j>0$, we have
\begin{eqnarray}
& &\sum_{j=0}^{R-3/2}(\psi_j-\sin\psi_j) \geq
\psi_{R-3/2}-\sin\psi_{R-3/2}\label{eq:psi}\\
&\geq &\frac{4}{3\pi^2}\psi_{R-3/2}^3=\frac{4}{3\pi^2}(\arccos(1-1/R))^3 \nonumber\\
 &\geq &\frac{8\sqrt{2}}{3\pi^2}R^{-3/2}.\nonumber
\end{eqnarray}
 Combining (\ref{eq:eq1}),
(\ref{eq:lian}) and (\ref{eq:psi}), we arrive at
\begin{eqnarray*}
& &\int_{-\pi}^\pi \Delta_\delta(r\cos\theta)\cos\theta
d\theta\\
 &=&\delta \left(\pi R-2\int_0^\pi \lfloor R\cos\theta
 +\frac{1}{2}\rfloor\cos\theta d\theta\right)\\
 &\geq &\frac{16\sqrt{2}}{3\pi^2}\frac{\delta^{3/2}}{\sqrt{r}}.
\end{eqnarray*}

We next consider the case $\varepsilon\neq 0$. Using the similar method  as before,
we have
$$
\int_0^\pi \lfloor R\cos\theta+\frac{1}{2}\rfloor \cos\theta d\theta
= R \sum_{j=0}^{R-1/2-\varepsilon} \sin\psi_{j-1}-\frac{1}{2}\sqrt{1-\frac{1}{4R^2}}+
(R+1-\varepsilon)\sqrt{\frac{2\varepsilon}{R}-(\frac{\varepsilon}{R})^2}
$$
and
$$
\pi R = 2R\sum_{j=0}^{R-1/2-\varepsilon}\psi_{j-1}
+2\sqrt{2\varepsilon R} -1+\frac{\sqrt{2}}{6}\frac{\varepsilon^{3/2}}{R^{1/2}}+
O(\frac{1}{R^{3/2}}).
$$
Then
\begin{eqnarray}
 & &\pi R-2 \int_0^\pi \lfloor R\cos\theta+1/2\rfloor \cos\theta d\theta\nonumber\\
&=&2 R\sum_{j=0}^{R-1/2-\varepsilon}(\psi_{j-1}-\sin\psi_{j-1})-2{(R+1-\varepsilon)}\sqrt{\frac{2\varepsilon}{R}-(\frac{\varepsilon}{R})^2}\nonumber\\
& &+2\sqrt{2\varepsilon R}
+\frac{\sqrt{2}}{6}\frac{\varepsilon^{3/2}}{\sqrt{R}}+O(\frac{1}{R^{3/2}}) \nonumber\\
&\geq& 2R(\psi_{R-3/2-\varepsilon}-\sin\psi_{R-3/2-\varepsilon})+(\frac{2\sqrt{2}}{3}\varepsilon^{3/2}-2(1-\varepsilon)\sqrt{2\varepsilon})\frac{1}{\sqrt{R}}+
O(\frac{1}{R^{3/2}})\nonumber\\
&=& \left(\frac{(\sqrt{2+2\varepsilon}-\sqrt{2\varepsilon})^3}{3}+\frac{2\sqrt{2}}{3}\varepsilon^{3/2}-2(1-\varepsilon)\sqrt{2\varepsilon}\right)\frac{1}{\sqrt{R}}+O(\frac{1}{R^{3/2}}).
\label{eq:nonint}
\end{eqnarray}
Note that there exists a $\varepsilon_0>0$ such that
$$
\frac{(\sqrt{2+2\varepsilon}-\sqrt{2\varepsilon})^3}{3}+\frac{2\sqrt{2}}{3}\varepsilon^{3/2}-2(1-\varepsilon)\sqrt{2\varepsilon}
$$
is positive when $\varepsilon \in [0,\varepsilon_0]$, which implies that
$$
\pi R-2 \int_0^\pi \lfloor R\cos\theta+1/2\rfloor \cos\theta d\theta \geq \frac{C_1}{\sqrt{R}}
$$
when $R$ is big enough, where $C_1$ is a positive constant.  The conclusion follows.
\end{proof}

\begin{proof}[Proof of Theorem \ref{th:ave}]
We suppose $x_{\psi_0}=r[\cos\psi_0,\sin\psi_0]^T$ and
$\F=\{e_j\}_{j=1}^N$ with $e_j=[\cos\theta_j,\sin\theta_j]^T,
\theta_j\in [0,2\pi)$. Set $R:=r/\delta$. Then
\begin{eqnarray*}
E_\delta(x_{\psi_0},\F)&=&\frac{2}{N}\|\sum_{j=1}^N
\Delta_\delta(r\cos(\theta_j-\psi_0))e_j\| \\
&=& \frac{2\delta}{N}\|\sum_{j=1}^N
\Delta_1(R\cos(\theta_j-\psi_0))e_j\|.
\end{eqnarray*}
Let
$$
P_{\psi_0}:= \left[
  \begin{array}{cc}
    \cos\psi_0 &\sin\psi_0 \\
    -\sin\psi_0& \cos\psi_0\\
  \end{array}
\right]
$$
Then a simple observation is that
\begin{equation}\label{eq:th6E}
E_\delta(x_{\psi_0},\F)= \frac{2\delta}{N}\|\sum_{j=1}^N
\Delta_1(R\cos(\theta_j-\psi_0))P_{\psi_0}e_j\|.
\end{equation}
Denote $\mu_{\F}=\frac{1}{N}\sum_{j=1}^N\delta_{\theta_j}$ where
$\delta_{\theta_j}$ is the Dirac measure at $\theta_j$. Then we
can rewrite (\ref{eq:th6E}) as
$$
E_\delta(x_{\psi_0},\F)= 2\delta \|(H_R *
\mu_{\F})(\psi_0)\|,
$$
where $*$ is the component-wise convolution operator and
$$ H_R(t):=\Delta_1(R\cos t)\left[\begin{array}{c}\cos t \\
-\sin t
\end{array}\right].
$$
Note that
\begin{eqnarray*}
\EE_\delta(r,{\F})&=&\left(\int_0^{2\pi} (E(x_{\psi},{\F}))^2
d\psi\right)^{1/2}\\
&=& 2\delta \|H_R*\mu_{\F}\|_{L^2} =\frac{\sqrt{2}\delta}{\sqrt{\pi}} (\sum_{k\in \Z}\|
\widehat{H_R*\mu_{\F}}(k)\|^2)^{1/2} \\
&=&\frac{\sqrt{2}\delta}{\sqrt{\pi}} (\sum_{k\in \Z}\|\hat{H}_R(k)\|^2
|\hat{\mu}_{\F}(k)|^2)^{1/2},
\end{eqnarray*}
where $\|\cdot\|_{L^2}$  and $\|\cdot\|$ denote the $L^2$ norm of vector functions and $\ell^2$ norm of $\R^2$, respectively.
Since $\hat{\mu}_{\F}(0)=1$, it follows that
\begin{eqnarray*}
\EE_\delta(r,{\F})&=&\frac{\sqrt{2}\delta}{\sqrt{\pi}} (\|\hat{H}_R(0)\|^2+\sum_{k\in \Z\setminus
\{0\}} \|\hat{H}_R(k)\|^2 |\hat{\mu}_{\F}(k)|^2)^{1/2}\\
&\geq & \frac{\sqrt{2}\delta}{\sqrt{\pi}}  \|\hat{H}_R(0)\|.
\end{eqnarray*}
We still need to estimate $\hat{H}_R(0)$. Note
\begin{eqnarray*}
\hat{H}_R(0)&=&\int_0^{2\pi}H_R(t)dt=\int_0^{2\pi} \Delta_1(R\cos
t)\left[\begin{array}{c}\cos t \\ -\sin t\end{array}\right] dt \\
&=&\left[\begin{array}{c} \int_0^{2\pi }\Delta_1(R\cos t)\cos t dt \\
0
\end{array}\right].
\end{eqnarray*}
By (i) in Lemma \ref{le:low1}, when $\delta$ is small enough,
$$
\int_0^{2\pi }\Delta_1(R\cos t)\cos t dt =
\frac{1}{\delta}\int_0^{2\pi }\Delta_\delta(r\cos t)\cos t dt
\geq \frac{16\sqrt{2}}{3\pi^2}\sqrt{\frac{\delta}{{r}}},
$$
which implies that
$$
\EE_\delta(r,{\F}) \geq \frac{32}{3\pi^{5/2}}\frac{\delta^{3/2}}{\sqrt{r}}.
$$
Similarly, (ii) can be proved by (ii) in Lemma \ref{le:low1}.

We now turn to (iii). Note that $\hat{H}_R(0)=0$ if
\begin{equation}\label{eq:deng0}
\int_{-\pi}^\pi \Delta_\delta(r\cos\theta)\cos\theta d\theta=0.
\end{equation}
Also, note that
$$
\hat{\mu}_{\tilde{\F}}(k)=\begin{cases}
1, &  k\, { \rm mod }\, N=0,\\
0,  & k\, { \rm mod }\, N\neq 0.
\end{cases}
$$
where $N=\#\tilde{\F}$. Hence,
$$
\EE_\delta(r,\tilde{\F})=\frac{\sqrt{2}\delta}{\sqrt{\pi}}\left(\sum_{k\in \Z\setminus\{0\}}\|\hat{H}_R(kN)\|^2\right)^{1/2}.
$$
According to Riemann-Lebesgue Lemma,
$$
 \|\hat{H}_R(N)\|=O(\frac{1}{N}).
$$
The conclusion follows.

We now consider (\ref{eq:deng0}).  A simple calculation shows that
$$
\int_{-\pi}^\pi \Delta_{\delta}(r\cos\theta)\cos\theta d\theta= \pi r-4\delta\sqrt{1-\frac{\delta^2}{4r^2}}
$$
provided $r\leq \delta \leq 2r$. The result implies that
$$
\hat{H}_R(0)=0
$$
if
$$
   R = \frac{r}{\delta} =\frac{\sqrt{8-2\sqrt{16-\pi^2}}}{\pi}.
$$

\end{proof}

\section{The proof of Theorem \ref{th:lowbound}}

To prove Theorem \ref{th:lowbound}, we first introduce a lemma.

\begin{lemma}\label{le:sanjiao}
When $d\geq 2$,
$$
\left| \int_0^\pi
\Delta_\delta(r\cos\theta)\cos\theta(\sin\theta)^{d-2}d\theta\right|\,\,\ll_d
\,\,  \frac{\delta^{(d+1)/2}}{r^{(d-1)/2}}.
$$
\end{lemma}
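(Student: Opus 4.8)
The plan is to expand the periodic sawtooth $\Delta_1(s):=s-\lfloor s+\tfrac12\rfloor$ in its Fourier series and reduce the estimate to a single oscillatory integral which I will identify with a Bessel function. First I would normalize: with $R:=r/\delta$ one has $\Delta_\delta(r\cos\theta)=\delta\,\Delta_1(R\cos\theta)$, so the claim is equivalent to $\bigl|\int_0^\pi \Delta_1(R\cos\theta)\cos\theta(\sin\theta)^{d-2}\,d\theta\bigr|\ll_d R^{-(d-1)/2}$. We may assume $R\ge 1$, since otherwise $\delta^{(d+1)/2}/r^{(d-1)/2}\ge\delta$ while the left side is trivially $\le\frac{\delta}{2}\int_0^\pi|\cos\theta|(\sin\theta)^{d-2}\,d\theta\ll_d\delta$. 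Then I would use $\Delta_1(s)=\frac1\pi\sum_{k\ge1}\frac{(-1)^{k+1}}{k}\sin(2\pi k s)$. Since the partial sums of $\sum_{k}\frac{\sin k\phi}{k}$ are bounded uniformly in $N$ and $\phi$, the partial sums of this series are bounded by an absolute constant, and they converge to $\Delta_1$ off the half-integers; as $\{\theta\in[0,\pi]:R\cos\theta\in\tfrac12\Z\}$ is finite, dominated convergence gives the term-by-term identity
\[
\int_0^\pi \Delta_1(R\cos\theta)\cos\theta(\sin\theta)^{d-2}\,d\theta=\frac1\pi\sum_{k\ge1}\frac{(-1)^{k+1}}{k}\,I_k,\qquad I_k:=\int_0^\pi \sin(2\pi k R\cos\theta)\cos\theta(\sin\theta)^{d-2}\,d\theta.
\]

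The heart of the argument is the bound $|I_k|\ll_d (kR)^{-(d-1)/2}$. Write $a:=2\pi kR\ (\ge 2\pi)$. Using $\cos\theta(\sin\theta)^{d-2}=\frac1{d-1}\frac{d}{d\theta}(\sin\theta)^{d-1}$ and integrating by parts (the boundary terms vanish because $d\ge2$) gives $I_k=\frac{a}{d-1}\int_0^\pi\cos(a\cos\theta)(\sin\theta)^d\,d\theta$. By Poisson's integral representation of the Bessel function, $\int_0^\pi\cos(a\cos\theta)(\sin\theta)^d\,d\theta=\sqrt\pi\,\Gamma\!\bigl(\tfrac{d+1}{2}\bigr)(2/a)^{d/2}J_{d/2}(a)$, so $I_k=c_d\,a^{1-d/2}J_{d/2}(a)$ for an explicit $c_d$. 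The standard decay $|J_\nu(a)|\ll_\nu a^{-1/2}$ for $a\ge1$ then yields $|I_k|\ll_d a^{1-d/2}a^{-1/2}=a^{-(d-1)/2}\ll_d(kR)^{-(d-1)/2}$. Summing,
\[
\Bigl|\int_0^\pi \Delta_1(R\cos\theta)\cos\theta(\sin\theta)^{d-2}\,d\theta\Bigr|\;\ll_d\; R^{-(d-1)/2}\sum_{k\ge1}k^{-(d+1)/2}\;\ll_d\; R^{-(d-1)/2},
\]
the series converging since $(d+1)/2>1$ for $d\ge2$; multiplying by $\delta$ and substituting $R=r/\delta$ gives $\delta R^{-(d-1)/2}=\delta^{(d+1)/2}/r^{(d-1)/2}$, the claim.

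The main obstacle is obtaining the correct $d$-dependence in $|I_k|\ll_d(kR)^{-(d-1)/2}$: a crude van der Corput estimate on, say, $[0,\pi/3]$ (where $|\frac{d^2}{d\theta^2}a\cos\theta|\gg a$) only gives $(kR)^{-1/2}$, which is enough for $d=2$ but not for $d\ge3$. One must genuinely exploit that the amplitude $(\sin\theta)^{d-2}$ vanishes to order $d-2$ at the stationary points $\theta=0,\pi$ of the phase $a\cos\theta$. The integration-by-parts/Bessel route above does this transparently; alternatively one can split $[0,\pi/3]=[0,a^{-1/2}]\cup[a^{-1/2},\pi/3]$, bound the first piece by $\int_0^{a^{-1/2}}\theta^{d-2}\,d\theta\ll_d a^{-(d-1)/2}$, and integrate by parts against the non-stationary phase on the second, where the boundary contribution at $a^{-1/2}$ again produces $a^{-(d-1)/2}$. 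A secondary, purely technical point — already dealt with above — is that the sawtooth Fourier series converges only conditionally, so the term-by-term integration must be justified via uniform boundedness of its partial sums.
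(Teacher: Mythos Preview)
Your proof is correct and takes a genuinely different route from the paper's. The paper does not use the Fourier expansion of the sawtooth at all: it writes the integral as $\delta\bigl(R\int_0^\pi\cos^2\theta\,\sin^{d-2}\theta\,d\theta-\int_0^\pi\lfloor R\cos\theta+\tfrac12\rfloor\cos\theta\,\sin^{d-2}\theta\,d\theta\bigr)$, evaluates the second integral by chopping $[0,\pi]$ at the jump points $\arccos((j\pm\tfrac12)/R)$ to obtain the finite sum $\frac{1}{d-1}\sum_j\bigl(1-((j-\tfrac12)/R)^2\bigr)^{(d-1)/2}$, and then applies the Euler--Maclaurin formula to show this sum equals $R\int_0^\pi\cos^2\theta\,\sin^{d-2}\theta\,d\theta+O_d(R^{-(d-1)/2})$, treating even and odd $d$ separately when estimating the remainder. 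Your approach via $\Delta_1(s)=\sum_{k\ge1}\frac{(-1)^{k+1}}{\pi k}\sin(2\pi ks)$, the integration by parts that turns $I_k$ into $\frac{a}{d-1}\int_0^\pi\cos(a\cos\theta)\sin^d\theta\,d\theta$, Poisson's integral for $J_{d/2}$, and the standard $|J_\nu(a)|\ll_\nu a^{-1/2}$ is shorter, treats all $d\ge2$ uniformly, and makes transparent why the amplitude vanishing to order $d-2$ at the stationary points yields exactly the exponent $(d-1)/2$. The trade-off is that your argument imports the Bessel asymptotic as a black box, whereas the paper's Euler--Maclaurin computation is entirely self-contained; both reach the same bound with the same dependence on $d$.
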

\begin{proof}
Similar with Lemma \ref{le:low1}, we set
  $R:=r/\delta$. To state conveniently, we only consider the case  $R+1/2\in \Z$. The other case
  can be proved by a similar method.
A simple observation is that
\begin{eqnarray}
& &\int_0^\pi
\Delta_\delta(r\cos\theta)\cos\theta(\sin\theta)^{d-2}d\theta= \nonumber\\
& &\delta  \left(R \int_0^\pi
(\cos\theta)^2(\sin\theta)^{d-2}d\theta-\int_0^\pi \lfloor
R\cos\theta+\frac{1}{2}\rfloor\cos\theta (\sin\theta)^{d-2}d\theta
\right).\label{eq:he2}
\end{eqnarray}
Then, we  consider
\begin{eqnarray}
& &\int_0^\pi \lfloor R\cos\theta+\frac{1}{2}\rfloor\cos\theta
(\sin\theta)^{d-2}d\theta \nonumber\\
&=& \sum_{j=-R+1/2}^{R-1/2}
j\int_{\arccos((j+1/2)/R)}^{\arccos((j-1/2)/R)}\cos\theta
(\sin\theta)^{d-2}d\theta \nonumber\\
&=& \frac{1}{d-1}\sum_{j=-R+1/2}^{R-1/2}
j((1-((j-1/2)/R)^2)^{(d-1)/2}-(1-((j+1/2)/R)^2)^{(d-1)/2})\nonumber\\
&=&
\frac{1}{d-1}\sum_{j=-R+3/2}^{R-1/2}(1-((j-1/2)/R)^2)^{(d-1)/2}.\label{eq:jifen}
\end{eqnarray}
As we shall see later,
\begin{eqnarray}\label{eq:he1}
& &\frac{1}{d-1}
\sum_{j=-R+3/2}^{R-1/2}(1-((j-1/2)/R)^2)^{(d-1)/2}\nonumber\\
&=&R\int_0^\pi(\cos\theta)^{2} (\sin\theta)^{d-2}
d\theta+O({1}/{R^{(d-1)/2}}).
\end{eqnarray}
 Then combining (\ref{eq:he2}), (\ref{eq:jifen}) and
(\ref{eq:he1}), we reach the conclusion.

We remain to argue (\ref{eq:he1}). We set
$$
f(x)\,\,:=\,\,\frac{1}{d-1}
\left(1-\left((2x-1)/(2R)\right)^2\right)^{(d-1)/2}.
$$
Then the left side of (\ref{eq:he1}) equals to
$\sum_{j=-R+3/2}^{R-1/2} f(j)$.
 Recall that
Euler-Maclaurin formula
\begin{eqnarray*}
\sum_{j=-R+3/2}^{R-1/2} f(j)\,\,&=&\,\, \int_{-R+3/2}^{R-1/2}
f(x)dx+{(f(-R+3/2)+f(R-1/2))}/{2}\\
& &+\sum_{j=2}^p
(B_{j}/j!)\left(f^{(j-1)}(-R+3/2)-f^{(j-1)}(R-1/2)\right)+E_p,
\end{eqnarray*}
where $B_{j}$ are the Bernoulli numbers and
$$
|E_p|\,\,\leq\,\,
\frac{2}{(2\pi)^p}\int_{-R+3/2}^{R-1/2}|f^{(p)}(x)|dx.
$$
 Note that
\begin{eqnarray*}
\int_{-R+3/2}^{R-1/2}f(x)dx=\int_{-R+1/2}^{R+1/2}f(x)dx+O(\frac{1}{R^{(d-1)/2}}).
\end{eqnarray*}
Let $\theta=\arccos ((2x-1)/2R)$. We have
$$
\int_{-R+1/2}^{R+1/2}f(x)dx\,\,=\,\, \frac{R}{d-1}\int_0^\pi
(\sin\theta)^{d} d\theta \,\,=\,\, R\int_0^\pi(\cos\theta)^{2}
(\sin\theta)^{d-2} d\theta,
$$
where the last equality follows from the integration by parts.
Then we arrive at
\begin{eqnarray*}
\int_{-R+3/2}^{R-1/2}f(x)dx=R\int_0^\pi(\cos\theta)^{2}
(\sin\theta)^{d-2} d\theta +O(\frac{1}{R^{(d-1)/2}}).
\end{eqnarray*}
 Note that
\begin{equation}\label{eq:comb1}
f(-R+3/2)=f(R-1/2)\ll \frac{1}{R^{(d-1)/2}}
\end{equation}
 and
\begin{equation}\label{eq:comb2}
|f^{(j-1)}(-R+3/2)-f^{(j-1)}(R-1/2)|\ll_j  \frac{1}{R^{(d-1)/2}},
\quad\quad j\geq 2.
\end{equation}
Hence, to prove (\ref{eq:he1}), we just need estimate the error
term in Euler-Maclaurin formula. We first consider the case where
$d-1$ is an even number. We take $p=d$ in Euler-Maclaurin formula,
 and then $E_p=0$ with $f^{(j)}\equiv 0$ provided $j\geq d$. Then
(\ref{eq:he1}) follows when $d-1$ is even.

We turn to the case where $d-1$ is odd.  We consider the $j$th
derivative of $f$. Recall that $\theta(x)=\arccos ((2x-1)/2R)$ is a
function about $x$. Then, using the new variable $\theta$,
$f(x)=(\sin\theta)^{d-1}/(d-1)$ and $\theta'(x)=-1/(R\sin \theta)$. A
simple calculation shows that
\begin{eqnarray*}
f'(x)&=&f'(\theta)\theta'(x)=-\frac{1}{R}(\sin\theta)^{d-3}\cos\theta,\\
f''(x)&=&\frac{1}{R^2}\left((d-3)(\sin\theta)^{d-5}-(d-2)(\sin\theta)^{d-3}\right).
\end{eqnarray*}
Then, by induction, $f^{(2j)}$ is in the form of
$$
\frac{1}{R^{2j}}\sum_{j\leq k\leq 2j}
C_{k,d-1}(\sin\theta)^{d-2k-1},\,\, C_{k,d-1}\in \R,
$$ while $f^{(2j+1)}$ is in the form of
$$
\frac{1}{R^{2j+1}}\sum_{j+1\leq k\leq 2j+1}
C_{k,d-1}'(\sin\theta)^{d-2k-1}\cos\theta,\,\, C_{k,d-1}'\in \R.
$$
We take $p=(d+2)/2$ in Euler-Maclaurin formula.  Then
\begin{equation}\label{eq:comb3}
|E_p| \ll_p
\frac{1}{R^{d/2}}\int_{\arccos(1-1/R)}^{\arccos(-1+1/R)}
\frac{1}{\sin^2\theta}d\theta \ll \frac{1}{R^{(d-1)/2}}.
\end{equation}
when $p$ is even.  Similar argument also holds when $p$ is odd.
Combining (\ref{eq:comb1}),  (\ref{eq:comb2}) and (\ref{eq:comb3}), we arrive at (\ref{eq:he1}). The conclusion follows.
\end{proof}

\begin{proof}[Proof of Theorem \ref{th:lowbound}] We denote the
number of the non-zero entries in $x$ by $\|x\|_0$, i.e.,
$$
\|x\|_0 \,\,:=\,\,\#\{j:x_j\neq 0\}.
$$
 The proof is by induction on $\|x\|_0$. Note that
\begin{eqnarray*}
& &\lim_{n\rightarrow \infty}E_\delta(x,\F_n)
=\lim_{n\rightarrow
\infty}\|\frac{d}{N_n}\sum_{j=1}^{N_n}\Delta_\delta(x\cdot e_j
)e_j \|\\
&=&d\, \|\int_{{\bf z}\in \MS^d}\Delta_\delta(x\cdot {\bf z}
){\bf z}d\omega \|.
\end{eqnarray*}
 We begin with $\|x\|_0=1$.
 Without loss of generality, we suppose
$x=[x_1,0,\ldots,0]^T\in \R^{d}$ and consider
$\lim_{n\rightarrow \infty}E_\delta(x,\F_n)$. By the
sphere coordinate system, each ${\bf z}=[z_1,\ldots,z_{d-1}]\in \MS^{d-1}$
can be written in the form of
\begin{eqnarray*}
[\cos\theta_1,\sin\theta_1\cos\theta_2,
\sin\theta_1\sin\theta_2\cos\theta_3,\ldots,\sin\theta_1\cdots
\sin\theta_{d-1}]^T,
\end{eqnarray*}
where $\theta_1\in [0,\pi)$ and $\theta_j\in [-\pi,\pi), 2\leq
j\leq d-1$. To state conveniently, we set
$$
\Theta\,\,:=\,\, [0,\pi)\times \underbrace{[-\pi, \pi)\times
\cdots \times [-\pi,\pi)}_{d-2}, \qquad {S}_m(\theta):=\prod_{j=1}^{m}\sin\theta_j
$$
and
$$
J_d(\theta):=\left|(\sin\theta_1)^{d-2}(\sin\theta_2)^{d-3}\cdots
(\sin\theta_{d-2})\right|.
$$
 Noting that
$$
d\omega= { J}_d(\theta) d\theta_1\cdots d\theta_{d-1}\quad
\mbox{ and }\quad \int_{{\bf z}\in \MS^{d-1}}\Delta_\delta(x_1  z_1){
z}_jd\omega=0,\quad\quad 2\leq j\leq d-1,
$$
 we have
\begin{eqnarray*}
& &\lim_{n\rightarrow \infty}E_\delta(x,\F_n)
=d\,\|\int_{{\bf z}\in \MS^{d-1}}\Delta_\delta(x\cdot
{\bf z} ){\bf z}d\omega \|\\
&=& d\,\left|\int_{{\bf z}\in \MS^{d-1}}\Delta_\delta(x_1
{z}_1 ){z}_1d\omega \right|\\
&=& d\,\left|\int_{\theta\in
\Theta}\Delta_\delta(x_1\cos\theta_1)\cos\theta_1(\sin\theta_1)^{d-2}
|(\sin\theta_2)^{d-3}\cdots (\sin\theta_{d-2})| d \theta_1\cdots
d\theta_{d-1}\right|\\
&\ll_d& \delta^{(d+1)/2}/|x_1|^{(d-1)/2}
\end{eqnarray*}
where the last inequality  follows from  Lemma \ref{le:sanjiao}.

For the induction step, we suppose that the conclusion holds for
the case where $\|x\|_0\leq k$. We now consider $\|x\|_0 \leq
k+1$. Without loss of generality, we suppose $x$ is in the form of
$[0,\ldots,0,x_{d-k+1},\ldots,x_{d}]\in \R^{d}$. We can write
$[x_{d-1},x_{d}]$ in the form of $(r_0\cos\varphi_0,r_0\sin\varphi_0)$,
where $r_0\in\R_+$ and $\varphi_0\in [0,2\pi)$. Then
$$
x\cdot {\bf z} =\sum_{m=d-k+1}^{d-2}x_mS_m(\theta)\cos\theta_m+
r_0\sin\theta_1\cdots \sin\theta_{d-2}\cos(\theta_{d-1}-\varphi_0)=:{
T}(\varphi_0).
$$
A simple observation is
\begin{eqnarray*}
& &\left(\int_{\theta\in\Theta}\Delta_\delta({ T}(\varphi_0)){
S}_{d-2}(\theta){ J}_d(\theta)\cos\theta_{d-1}
d\theta\right)^2+\left(\int_{\theta\in\Theta}\Delta_\delta({
T}(\varphi_0)){ S}_{d-2}(\theta){ J}_{d}(\theta) \sin\theta_{d-1}
d\theta\right)^2\\
&=&\left(\int_{\theta\in\Theta}\Delta_\delta({ T}(0)){
S}_{d-2}(\theta){ J}_d(\theta)\cos\theta_{d-1}
d\theta\right)^2+\left(\int_{\theta\in\Theta}\Delta_\delta({
T}(0)){ S}_{d-2}(\theta){ J}_d(\theta)\sin\theta_{d-1}
d\theta\right)^2.
\end{eqnarray*}
Then we have
\begin{eqnarray*}
& &\lim_{n\rightarrow \infty}E_\delta(x,\F_n)
=d\|\int_{{\bf z}\in \MS^{d-1}}\Delta_\delta(x\cdot
{\bf z} ){\bf z}d\omega \|\\
&=&d\left(\sum_{m=d-k+1}^{d-1}(\int_{\theta\in\Theta}\Delta_\delta(T(\varphi_0))S_{m-1}(\theta)J_d(\theta)\cos\theta_m
d\theta)^2+(\int_{\theta\in\Theta}\Delta_\delta(T(\varphi_0))S_{d-1}(\theta)J_d(\theta)
d\theta)^2\right)^{1/2}\\
&=&d\left(\sum_{m=d-k+1}^{d-1}(\int_{\theta\in\Theta}\Delta_\delta(T(0))S_{m-1}(\theta)J_d(\theta)\cos\theta_m
d\theta)^2+(\int_{\theta\in\Theta}\Delta_\delta(T(0))S_{d-1}(\theta)J_d(\theta)
d\theta)^2\right)^{1/2}\\
&\ll_d &  \delta^{(d+1)/2}/r^{(d-1)/2}
\end{eqnarray*}
where the last inequality follows from the fact $\|x\|_0\leq k$
provided $\varphi_0=0$.

\end{proof}

\bigskip

\bibliographystyle{amsplain}

\end{document}